\newtheorem{thm}{\bf Theorem}[section]
\newtheorem{prop}[thm]{\sc Proposition}
\newtheorem{lem}[thm]{\sc Lemma}
\newtheorem{cor}[thm]{\sc Corollary}
\theoremstyle{definition}
\theoremstyle{definition}\newtheorem{de}[thm]{\sc Definition}
\theoremstyle{definition}
\theoremstyle{definition}
\theoremstyle{definition}
\theoremstyle{definition}
\numberwithin{equation}{section}
\DeclareMathOperator{\supp}{supp}
\DeclareMathOperator{\N}{\mathbb N}
\DeclareMathOperator{\R}{\mathbb R}
\begin{document}
\title[sample]{Subelliptic Operators on Weighted Folland-Stein Spaces}
\author{Hung-Lin Chiu}
\address{Department of Mathematics, National Tsing Hua University, Hsinchu, Taiwan 300, R.O.C.}
\email{hlchiu@math.nthu.edu.tw}
\subjclass{1991 Mathematics Subject Classification.\ 32V05, 32V20}
\keywords{Key Words: Heisenberg group, Asymptotically flat pseudo-hermitian manifolds, Weighted Folland-Stein spaces, Spherical CR structure}

\begin{abstract}
In this paper, we show that the sub-Laplacian of an asymptotically flat pseudo-hermitian manifold defined on a suitable weighted Folland-Stein spaces is an isomorphism. Similarly in dimension $5$ a certain Dirac-type operator is shown to be an isomorphism, which is used to resolve the CR positive mass problem
\end{abstract}

\maketitle

\section{Introduction}
The CR positive mass problem plays an essential role in CR geometry. As well known, we need a CR positive mass theorem to solve the CR Yamabe problem for the cases which either the CR dimension $n=1$ or the CR manifold $M$ is spherical with higher CR dimension. When $n=1$, this was shown by Cheng, Malchiodi and Yang \cite{CMY}. On the other hand, when $n\geq 2$ and $M$ is spherical, this was finished by Cheng, Yang and the author \cite{CCY} through showing that the developing map is injective. However in the case $n=2$, we need an extra condition on the growth rate of the Green's function on the universal cover of $M$. So in the case $n=2$, the CR positive mass theorem is not really completed. In the paper \cite{CC}, Cheng and the author showed that for $n=2$, $M$ being spherical, if moreover $M$ has a spin structure, then we have the CR positive mass theorem built up through a spinorial approach.

Recall that in 1982, E. Witten described a proof of the positive mass theorem using spinors (see  \cite{W,PT}). Applying a Weitzenbock-type formula to $\psi$ satisfying $D^{2}\psi=0$ and approaching a constant spinor at infinity and integrating after taking the inner product with $\psi$, we then pick up the $p$-mass from the boundary integral and obtain a Witten-type formula for the $p$-mass. So the non-negativity of $p$-mass follows. Therefore, for the CR positive mass theorem, it suffices to show that the square of the Dirac operator $D^2$ on some suitable weighted Folland-Stein spaces is an isomorphism. In this paper we prove that both sublaplacian $\Delta_{b}$ (see Theorem \ref{mainres02}) and $D^{2}$ (see Theorem \ref{mainres03}) on suitable spaces are isomorphisms.\\

{\bf Acknowledgement:} The author would like to thank the Ministry of Science and Technology of Taiwan for the support by MOST 109-2115-M-007-004 -MY3.
We would also like to thank Professor Jih-Hsin Cheng for all suggestions and discussions on this paper.

\section{Operations on Weighted Folland-Stein Spaces}
For basic material about the Heisenberg group, we refer the reader to Section $7$ Appendix in \cite{CC}. Let $H_{n}$ be the Heisenberg group and $Q=2n+2$ be the homogeneous dimension of $H_{n}$. The summation convention applies and derivatives along $\mathring{e}_{a}, 1\leq a\leq 2n$, may be denoted by 
subscripts $u_{a}=\mathring{e}_{a}u$. The sub-gradient $\mathring{\nabla}_{b}u=\sum_{a=1}^{2n}u_{a}\mathring{e}_{a}$. The sub-laplacian $\mathring{\Delta}_{b}u=\sum_{a=1}^{2n}(\mathring{e}_{a})^{2}u$.
Set
\begin{equation}\label{opw01}
\rho=(|z|^{4}+t^{2})^{\frac{1}{2}},\ \ \sigma=(1+\rho^{4})^{\frac{1}{4}},
\end{equation}
Denote $B_{R}=B_{R}(0)$ as the closed Heisenberg ball of radius $R$ and center $0$. The annulus is $A_{R}=B_{2R}\setminus B_{R}$, and the exterior domain is 
$E_{R}=H_{n}\setminus B_{R}$.

On $H_{n}$, we have the dilation operator defined by
\begin{equation}\label{opw02}
R(z,t)=(Rz,R^{2}t)
\end{equation}

\subsection{The weighted Folland-Stein spaces}

For $\delta\in \R$, we define the weighted Lebesgue spaces $L^{p}_{\delta},L'^{p}_{\delta}$ with weight $\delta\in\R$ as the spaces of measurable functions in $L^{p}_{loc}(H_{n}),L^{p}_{loc}(H_{n}\setminus\{0\})$,
respectively, for which the norms defined by
\begin{equation}\label{opw03}
\|u\|_{p,\delta}=\left\{\begin{array}{ll}\left(\int_{H_{n}}|\sigma^{-\delta}u|^{p}\sigma^{-Q}dV_{(z,t)}\right)^{\frac{1}{p}},&\ \ p<\infty,\\
\ \ \textrm{ess sup}_{H_{n}}(\sigma^{-\delta}|u|),&\ \ p=\infty,
\end{array}\right.
\end{equation}
and
\begin{equation}\label{opw04}
\|u\|'_{p,\delta}=\left\{\begin{array}{ll}\left(\int_{H_{n}\setminus\{0\}}|\rho^{-\delta}u|^{p}\rho^{-Q}dV_{(z,t)}\right)^{\frac{1}{p}},&\ \ p<\infty,\\
\ \ \textrm{ess sup}_{H_{n}\setminus\{0\}}(\rho^{-\delta}|u|),&\ \ p=\infty,
\end{array}\right.
\end{equation}
are finite, where $dV_{(z,t)}=\Theta\wedge(d\Theta)^{n}$ and $\Theta=dt+\frac{1}{2}\sum_{\beta=1}^{n}(iz^{\beta}d\bar{z}^{\beta}-i\bar{z}^{\beta}dz^{\beta})$.

For $k\in\N\cup\{0\}$, the weighted Folland-Stein spaces $S^{p}_{k,\delta},S'^{p}_{k,\delta}$ are the spaces of $u$ such that
\begin{equation}
|\nabla^{j}u|\in L^{p}_{\delta-j},L'^{p}_{\delta-j},\ \ \textrm{for}\ 0\leq j\leq k,
\end{equation}
respectively, with norms
\begin{equation}\label{opw05}
\|u\|_{k,p,\delta}=\sum_{j=0}^{k}\|\nabla^{j}u\|_{p,\delta-j},
\end{equation}
\begin{equation}\label{opw06}
\|u\|'_{k,p,\delta}=\sum_{j=0}^{k}\|\nabla^{j}u\|'_{p,\delta-j}
\end{equation}
where $\nabla, \nabla^{2}, \cdots$ denote the subdradient, subhession, $\cdots$ for simplcity.

Observe that $C^{\infty}_{0}(H_{n}),C^{\infty}_{0}(H_{n}\setminus\{0\})$ are dense in $S^{p}_{k,\delta},S'^{p}_{k,\delta}$, respectively, for $\delta<0$ and $1\leq p<\infty$ and that 
$L^{p}_{\delta}=L^{p}(H_{n})$ for $\delta=-Q/p$.

\subsection{Global weighted inequalities}
As R. Bartnik pointed out in \cite{Bar} that the indexing chosen for the weights has the advantage that it directly describes the growth at infinity (see \eqref{opw15} below). In addition, the rescaled function, defined by
\begin{equation}\label{opw07}
u_{R}(z,t)=u(R(z,t)),
\end{equation}
will satisfy the estimates \eqref{opw08} and \eqref{opw09} which follow by a simple change of variables. We have

\begin{equation}\label{opw08}
\|u_{R}\|'_{k,p,\delta}=R^{\delta}\|u\|'_{k,p,\delta}.
\end{equation}
Similarly, with an obvious notation for norms over subsets of $H_{n}$, we have 
\begin{equation}\label{opw09}
\|u\|_{k,p,\delta;A_{R}}\approx R^{-\delta}\|u_{R}\|_{k,p,\delta;A_{1}},
\end{equation}
for $R\geq 1$, where "$\approx$" means "is comparable to", independent of $R\geq 1$. The estimate \eqref{opw09} is the key to proving global weighted inequalities from local inequalities.

\begin{prop}\label{ebe1}
If $1\leq p\leq q\leq\infty,\ \delta_{2}<\delta_{1}$ and $u\in L^{q}_{\delta_{2}}$, then 
\begin{equation}\label{opw10}
\|u\|_{p,\delta_{1}}\leq c\|u\|_{q,\delta_{2}},
\end{equation}
and hence $L^{q}_{\delta_{2}}\subset L^{p}_{\delta_{1}}$.
\end{prop}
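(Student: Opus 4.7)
The plan is to unfold the norm definition, recast the inequality as a weighted $L^p$ vs.\ $L^q$ estimate with the weight $\sigma$ as a free parameter, and apply Hölder's inequality. Writing
\[
\|u\|_{p,\delta_1}^{p} = \int_{H_n} |u|^{p}\,\sigma^{-\delta_1 p - Q}\, dV_{(z,t)},
\]
and similarly for $\|u\|_{q,\delta_2}$, the goal is to bound the first integral by a power of the second times a purely geometric factor that involves only $\sigma$.

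For $1 \le p < q < \infty$, I would split the integrand as
\[
|u|^{p}\,\sigma^{-\delta_1 p - Q} \;=\; \bigl(|u|^{p}\,\sigma^{-(\delta_2 p + Qp/q)}\bigr)\cdot \sigma^{\,\delta_2 p + Qp/q - \delta_1 p - Q},
\]
and apply Hölder with exponents $q/p$ and $q/(q-p)$. The first factor raised to $q/p$ reproduces the $L^{q}_{\delta_2}$ norm. The second factor raised to $q/(q-p)$ gives the $\sigma$-exponent
\[
\bigl(\delta_2 p + Qp/q - \delta_1 p - Q\bigr)\cdot\frac{q}{q-p} \;=\; -(\delta_1-\delta_2)\frac{pq}{q-p} - Q.
\]
So the estimate reduces to showing
\[
I(\beta) := \int_{H_n} \sigma^{-\beta - Q}\, dV_{(z,t)} \;<\; \infty \qquad \text{for all } \beta > 0,
\]
with $\beta = (\delta_1-\delta_2)\,pq/(q-p) > 0$. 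The boundary cases are handled directly: if $p=q$, then since $\sigma \ge 1$ and $\delta_1 > \delta_2$ we have $\sigma^{-\delta_1 p - Q} \le \sigma^{-\delta_2 p - Q}$ pointwise, and the inequality is immediate with $c=1$; if $q=\infty$, one simply writes $|u| \le \|u\|_{\infty,\delta_2}\,\sigma^{\delta_2}$ a.e.\ and reduces again to $I(\beta)$ with $\beta = (\delta_1-\delta_2)p$.

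The main technical point—the one I expect to be the least automatic—is the convergence of $I(\beta)$. Near the origin, $\sigma \to 1$, so the integrand is bounded and integrability reduces to local finiteness, which follows from $dV_{(z,t)}$ being smooth and the Heisenberg ball $B_1$ having finite volume. At infinity, $\sigma \sim \rho$, and the Heisenberg-homogeneous volume growth $|B_R| \asymp R^{Q}$ converts the integral (via the coarea-type splitting on annuli $A_{2^{k}}$, using the scaling relation \eqref{opw09}) into a geometric series
\[
I(\beta) \;\lesssim\; 1 + \sum_{k\ge 0} (2^{k})^{-\beta - Q}\cdot |A_{2^k}| \;\lesssim\; 1 + \sum_{k\ge 0} 2^{-k\beta},
\]
which converges precisely because $\beta > 0$. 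Combining the Hölder step with the bound on $I(\beta)$ yields the constant $c = I(\beta)^{(q-p)/(pq)}$ (or its obvious analogue in the endpoint cases), proving both the inequality and the inclusion $L^{q}_{\delta_2}\subset L^{p}_{\delta_1}$.
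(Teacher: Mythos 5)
Your proposal is correct and follows essentially the same route as the paper: the same splitting of the integrand, the same Hölder application with exponents $q/p$ and $q/(q-p)$, the pointwise use of $\sigma\geq 1$ when $p=q$, and the reduction of the endpoint $q=\infty$ to integrability of a pure power of $\sigma$. The only difference is that you spell out the convergence of $\int_{H_n}\sigma^{-\beta-Q}\,dV$ via the annulus decomposition and volume growth $|B_R|\asymp R^Q$, a point the paper simply asserts, so your write-up is if anything slightly more complete.
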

\begin{proof} The estimate (\ref{opw10}) follows directly from the definition and H\"{o}lder inequality.
If $p=q<\infty$, we write
\begin{equation*}
\frac{|u|^{p}}{\sigma^{\delta_{1}p+Q}}=\frac{|u|^{q}}{\sigma^{\delta_{2}q+Q}}\ \frac{1}{\sigma^{(\delta_{1}-\delta_{2})q}}.
\end{equation*}
Then (\ref{opw10}) holds in this case. If $p<q<\infty$, we write
\begin{equation*}
\frac{|u|^{p}}{\sigma^{\delta_{1}p+Q}}=\frac{|u|^{p}}{\sigma^{\delta_{2}p+\frac{Qp}{q}}}\ \frac{1}{\sigma^{(\delta_{1}-\delta_{2})p}+\frac{Q(q-p)}{q}}.
\end{equation*}
Then by H\"{o}lder's inequality
\begin{equation*}
\begin{split}
\int_{E_{R}}\frac{|u|^{p}}{\sigma^{\delta_{1}p+Q}}\ dV&\leq\left[\int_{E_{R}}\left(\frac{|u|^{p}}{\sigma^{\delta_{2}p+\frac{Qp}{q}}}\right)^{q/p}\right]^{p/q}\left[\int_{E_{R}}\left(\frac{1}{\sigma^{(\delta_{1}-\delta_{2})p}+\frac{Q(q-p)}{q}}\right)^{q/(q-p)}\right]^{(q-p)/q}\\
&=\left[\int_{E_{R}}\left(\frac{|u|^{q}}{\sigma^{\delta_{2}q+Q}}\right)dV\right]^{p/q}\left[\int_{E_{R}}\left(\frac{1}{\sigma^{(\delta_{1}-\delta_{2})\frac{pq}{q-p}+Q}}\right)dV\right]^{(q-p)/q},
\end{split}
\end{equation*}
where the second term is integrable on $E_{R}$. Therefore (\ref{opw10}) holds. Finally, suppose that $p<q=\infty$, then
\begin{equation*}
\frac{|u|^{p}}{\sigma^{\delta_{1}p+Q}}=\frac{|u|^{p}}{\sigma^{\delta_{2}p}}\frac{1}{\sigma^{(\delta_{1}-\delta_{2})p+Q}}
\leq \|u\|^{p}_{\infty,\delta_{2}}\ \frac{1}{\sigma^{(\delta_{1}-\delta_{2})p+Q}},\ a.e.
\end{equation*}
Again that $\frac{1}{\sigma^{(\delta_{1}-\delta_{2})p+Q}}$ is integrable on $H_{n}$, we get (\ref{opw10}). Suppose $p=q=\infty$. We fix a point, since $\sigma\geq 1$, we have that $(\sigma^{-1})^{x}$ is a decreasing function, hence
$(\sigma^{-1})^{\delta_{1}}\leq(\sigma^{-1})^{\delta_{2}}$. And it is easy to see that if
\[m\{(z,t)\ |\ \sigma^{-\delta_{2}}|u(z,t)|>M\}=0\]
then
\[m\{(z,t)\ |\ \sigma^{-\delta_{1}}|u(z,t)|>M\}=0.\]
Therefore
\begin{equation*}
\begin{split}
\|u\|_{\infty,\delta_{1}}&=ess\ sup(\sigma^{-\delta_{1}}|u|)\\
&=\inf\{M\ |\ m\{(z,t)\ |\ \sigma^{-\delta_{1}}|u(z,t)|>M\}=0\}\\
&\leq \inf\{M\ |\ m\{(z,t)\ |\ \sigma^{-\delta_{2}}|u(z,t)|>M\}=0\}\\
&=ess\ sup(\sigma^{-\delta_{2}}|u|)=\|u\|_{\infty,\delta_{2}},
\end{split}
\end{equation*}
which gives \eqref{opw10}. Finally, suppose that $p<q=\infty$, since
\begin{equation*}
\begin{split}
|\sigma^{-\delta_{1}}u^{p}|\sigma^{-Q}&=|\sigma^{-\frac{\delta_{2}}{p}}u\sigma^{-\delta_{1}+\frac{\delta_{2}}{p}}|^{p}\sigma^{-Q}\\
&=\sigma^{-\delta_{2}}|u|\sigma^{-Q-p\delta_{1}+\delta_{2}}\\
&\leq M\frac{1}{\sigma^{Q+(p\delta_{1}-\delta_{2})}},\ \ \textrm{almost everywhere},
\end{split}
\end{equation*}
we have
\begin{equation*}
\begin{split}
\int_{H_{n}}\frac{|u|^{p}}{\sigma^{\delta_{1}p+Q}}dV_{(z,t)}&\leq\int_{H_{n}}\frac{M}{\sigma^{Q+(p\delta_{1}-\delta_{2})}}dV_{(z,t)}\\
&\leq C\ ess\ sup(\sigma^{-\delta_{2}}|u|).
\end{split}
\end{equation*}
We thus complete the proof.
\end{proof}

\begin{prop}[{\bf H\"{o}lder inequality}]
If $u\in L^{q}_{\delta_{1}}, v\in L^{r}_{\delta_{2}}$ and $\delta=\delta_{1}+\delta_{2}, \frac{1}{p}=\frac{1}{q}+\frac{1}{r}, 1\leq p,q,r\leq\infty$, then 
\begin{equation}\label{opw11}
\|uv\|_{p,\delta}\leq \|u\|_{q,\delta_{1}}\|v\|_{r,\delta_{2}},
\end{equation}
\end{prop}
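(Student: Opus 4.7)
The plan is to reduce this to the classical H\"older inequality on $H_{n}$ (with respect to $\sigma^{-Q}\,dV$) by distributing the weight $\sigma^{-Q}$ across the two factors according to the exponent condition $1/p=1/q+1/r$. The key algebraic observation is that $p/q+p/r=1$, so we may factor $\sigma^{-Q}=\sigma^{-Qp/q}\cdot\sigma^{-Qp/r}$ and rewrite the integrand defining $\|uv\|_{p,\delta}^{p}$ as a product of two pieces whose $(q/p)$-th and $(r/p)$-th powers recover the right norms.

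Concretely, in the case $1\le p,q,r<\infty$, I would start from
\begin{equation*}
\|uv\|_{p,\delta}^{p}=\int_{H_{n}}|\sigma^{-\delta_{1}}u|^{p}\,\sigma^{-Qp/q}\cdot|\sigma^{-\delta_{2}}v|^{p}\,\sigma^{-Qp/r}\,dV_{(z,t)},
\end{equation*}
using $\delta=\delta_{1}+\delta_{2}$ together with the factorisation of $\sigma^{-Q}$. Then applying ordinary H\"older with conjugate exponents $q/p$ and $r/p$ to the two factors yields
\begin{equation*}
\|uv\|_{p,\delta}^{p}\le\Bigl(\int_{H_{n}}|\sigma^{-\delta_{1}}u|^{q}\sigma^{-Q}dV\Bigr)^{p/q}\Bigl(\int_{H_{n}}|\sigma^{-\delta_{2}}v|^{r}\sigma^{-Q}dV\Bigr)^{p/r}=\|u\|_{q,\delta_{1}}^{p}\|v\|_{r,\delta_{2}}^{p},
\end{equation*}
and taking $p$-th roots gives \eqref{opw11}.

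The remaining work is to treat the endpoint cases where one (or more) of $p,q,r$ equals $\infty$. If $q=\infty$, then necessarily $p=r$ and $\delta_{1}$ can be absorbed as a pointwise bound $\sigma^{-\delta_{1}}|u|\le\|u\|_{\infty,\delta_{1}}$ almost everywhere, reducing the estimate to $\|uv\|_{p,\delta}\le\|u\|_{\infty,\delta_{1}}\|v\|_{p,\delta_{2}}$; the case $r=\infty$ is symmetric. If $p=q=r=\infty$, then the inequality reduces to the pointwise identity $\sigma^{-\delta}|uv|=(\sigma^{-\delta_{1}}|u|)(\sigma^{-\delta_{2}}|v|)$ together with the elementary essential-sup bound.

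I do not anticipate a serious obstacle: the only genuine content is the correct splitting of the weight $\sigma^{-Q}$ so that classical H\"older applies cleanly, and the endpoint cases are routine once the finite-exponent case has fixed the bookkeeping. One minor care point is to make sure the factorisation of $\sigma^{-Q}$ uses the identity $p/q+p/r=1$ rather than $1/q+1/r=1$, which is the typical source of error in this computation.
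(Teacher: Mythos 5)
Your proposal is correct and follows essentially the same route as the paper's proof: you split the weight via $\sigma^{-Q}=\sigma^{-Qp/q}\sigma^{-Qp/r}$ (using $p/q+p/r=1$), apply classical H\"older with conjugate exponents $q/p$ and $r/p$, and dispose of the endpoint cases $q=\infty$, $r=\infty$, and $p=q=r=\infty$ by pointwise bounds and the essential supremum, exactly as the paper does.
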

\begin{proof} The estimate (\ref{opw11}) follows directly from the definition and H\"{o}lder inequality.
If $1< p,q,r<\infty$, then we write
\begin{equation*}
\frac{|uv|^{p}}{\sigma^{\delta p+Q}}=\frac{|u|^{p}}{\sigma^{\delta_{1}p+\frac{Qp}{q}}}\ \frac{|v|^{p}}{\sigma^{\delta_{2}p+\frac{Qp}{r}}}.\end{equation*}
By H\"{o}lder's inequality, we have
\begin{equation*}
\int_{H_{n}}\frac{|uv|^{p}}{\sigma^{\delta p+Q}}dV=\left(\int_{H_{n}}\left(\frac{|u|^{p}}{\sigma^{\delta_{1}p+\frac{Qp}{q}}}\right)^{q/p}dV\right)^{p/q}\left(\int_{H_{n}}\left(\frac{|v|^{p}}{\sigma^{\delta_{2}p+\frac{Qp}{r}}}\right)^{r/p}dV\right)^{p/r}.
\end{equation*}
This shows (\ref{opw11}). If $p=q<\infty,\ r=\infty$, then 
\begin{equation*}
\frac{|uv|^{p}}{\sigma^{\delta p+Q}}=\frac{|u|^{p}}{\sigma^{\delta_{1} p+Q}}\ \frac{|v|^{p}}{\sigma^{\delta_{2} p}}\leq\frac{|u|^{p}}{\sigma^{\delta_{1} p+Q}}\ \|v\|^{p}_{\infty,\delta_{2}},\ a.e.
\end{equation*}
This shows (\ref{opw11}). The case $p=q<\infty,\ r=\infty$ is similar. In the end, suppose that $p=q=r=\infty$. Let $M_{1}, M_{2}$ be two numbers such that 
\[m\{(z,t)\ |\ \sigma^{-\delta_{1}}|u(z,t)|>M_{1}\}=0,\ m\{(z,t)\ |\ \sigma^{-\delta_{2}}|v(z,t)|>M_{2}\}=0.\]
Then we have
\[\sigma^{-\delta}|uv|=\sigma^{-\delta_{1}}|u|\cdot\sigma^{-\delta_{2}}|v|\leq M_{1}M_{2},\ \ \textrm{almost everywhere},\]
that is,
\[m\{(z,t)\ |\ \sigma^{-\delta}|uv|>M_{1}M_{2}\}=0,\]
which implies
\begin{equation}
\begin{split}
\|uv\|_{\infty,\delta}&=ess\ sup(\sigma^{-\delta}|uv|)\\
&=\inf\{M\ |\ m\{(z,t)\ |\ \sigma^{-\delta}|uv|>M\}=0\}\\
&\leq M_{1}M_{2},\ \ \textrm{for any such}\ M_{1}, M_{2}.
\end{split}
\end{equation}
Then (\ref{opw11}) follows. We thus complete the proof.
\end{proof}

The following two estimates, interpolation inequality and Sobolev inequality, follow from the technique of rescaling and applying local estimate:

\begin{prop}[{\bf Interpolation inequality}]
For any $\varepsilon>0$ and any non-negative integer $s$, there is a constant $C=C(Q,p,\delta,s)$ such that
\begin{equation}\label{opw12}
\|u\|_{s+1,p,\delta}\leq\varepsilon\|u\|_{s+2,p,\delta}+\frac{C}{\varepsilon}\|u\|_{0,p,\delta},
\end{equation}
for all $u\in S^{p}_{s+2,\delta}, 1\leq p<\infty$.
\end{prop}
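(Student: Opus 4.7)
The plan is the ``rescaling-and-patching'' strategy advertised in the paragraph containing \eqref{opw09}: reduce the global weighted interpolation to a classical local interpolation on the unit annulus and the unit ball, then transfer to every dyadic shell using the scale-invariance recorded in \eqref{opw09}, and sum.

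First I would establish the local interpolation inequality on $A_{1}$ (and, separately, on $B_{1}$): for some $C_{0}=C_{0}(Q,p,\delta,s)$ and every $\eta>0$,
\begin{equation*}
\|u\|_{s+1,p,\delta;A_{1}}\leq\eta\|u\|_{s+2,p,\delta;A_{1}}+\frac{C_{0}}{\eta}\|u\|_{0,p,\delta;A_{1}}.
\end{equation*}
Because $\sigma$ is bounded above and below by fixed positive constants on the compact set $A_{1}$, the weighted norms there are equivalent to the unweighted Folland--Stein norms, so this reduces to the classical Ehrling-lemma interpolation for $S^{p}_{k}(A_{1})$, which rests in turn on the compact embedding $S^{p}_{s+2}(A_{1})\hookrightarrow S^{p}_{s+1}(A_{1})$.

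Next I would transfer the estimate to every annulus $A_{R}$ with $R=2^{k}$, $k\geq 0$, by applying the local inequality to the rescaled function $u_{R}$ on $A_{1}$ and invoking \eqref{opw09}. Since \eqref{opw09} multiplies \emph{all three} norms by the same factor $R^{-\delta}$ regardless of the order $m$ in $\|\cdot\|_{m,p,\delta;\,\cdot\,}$, that factor cancels and one obtains the scale-invariant bound
\begin{equation*}
\|u\|_{s+1,p,\delta;A_{R}}\leq\eta\|u\|_{s+2,p,\delta;A_{R}}+\frac{C_{0}}{\eta}\|u\|_{0,p,\delta;A_{R}},\qquad R\geq 1,
\end{equation*}
with $C_{0}$ independent of $R$.

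Finally I would patch, using the equivalent $\ell^{p}$-form of the norm: up to constants depending only on $s,p$,
\begin{equation*}
\|u\|_{k,p,\delta}^{p}\approx\|u\|_{k,p,\delta;B_{1}}^{p}+\sum_{j\geq 0}\|u\|_{k,p,\delta;A_{2^{j}}}^{p}.
\end{equation*}
Raising the per-piece inequality to the $p$-th power, summing over $j$, using $a^{p}+b^{p}\leq(a+b)^{p}$ in reverse on the right, and taking $p$-th roots delivers \eqref{opw12} with $\varepsilon$ in place of $\eta$ after absorbing a harmless universal constant. The main obstacle is bookkeeping: the local estimate on $A_{1}$ controls derivatives of $u_{R}$ up to order $s+2$, so in the global form one must work with slightly enlarged overlapping annuli to house those derivatives and absorb the resulting $O(1)$ overlap multiplicity into $C$; one must also verify that $\nabla^{j}u_{R}=R^{j}(\nabla^{j}u)_{R}$ combines with the weight shift $\delta\mapsto\delta-j$ in \eqref{opw05} to reproduce exactly the $R^{-\delta}$ scaling of \eqref{opw09}, so that no spurious powers of $R$ leak into $C$.
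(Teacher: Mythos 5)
Your proposal is correct and follows essentially the same route as the paper: a local Ehrling-type interpolation on the unit annulus, transferred to each dyadic annulus $A_{2^{j}}$ by the rescaling identity \eqref{opw09} (with the $R^{-\delta}$ factors cancelling), and then an $\ell^{p}$ summation over the dyadic decomposition to recover the global estimate \eqref{opw12}. The only cosmetic difference is that the paper works with the restrictions of $u$ to the \emph{disjoint} pieces $B_{1}$, $A_{2^{j-1}}$, so the overlapping-annuli bookkeeping you flag as a potential obstacle is not actually needed.
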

\begin{proof}
The estimates (\ref{opw12}) follows from the technique of rescaling and applying local estimates. By the usual interpolation inequality, we have, for $1\leq p<\infty$, 
\begin{equation}\label{opw121}
\begin{split}
\|u\|_{s+1,p,\delta;A_{R}}&\leq CR^{-\delta}\|u\|_{s+1,p,\delta;A_{1}}\\
&\leq CR^{-\delta}\left(\varepsilon\|u_{R}\|_{s+2,p,\delta;A_{1}}+\frac{C}{\varepsilon}\|u_{R}\|_{0,p,\delta;A_{1}}\right)\\
&\leq \varepsilon\|u\|_{s+2,p,\delta;A_{R}}+\frac{C}{\varepsilon}\|u\|_{0,p,\delta;A_{R}},
\end{split}
\end{equation}
where $C$ is a constant independent of $A_{R}$, for $R\geq 1$.
Therefore, writing $u=\sum_{j=0}^{\infty}u_{j}$ with $u_{0}=u|_{B_{1}}, u_{j}=u|_{A_{2^{j-1}}}$, we see that $(A \preceq B\  \textrm{means}\ A\leq(\textrm{const})B)$
\begin{equation*}
\begin{split}
\|u\|_{s+1,p,\delta}&=\sum_{k=0}^{s+1}\|\nabla^{k}u\|_{p,\delta-k}=\sum_{k=0}^{s+1}\left(\sum_{j=0}^{\infty}\|\nabla^{k}u_{j}\|^{p}_{p,\delta-k}\right)^{1/p}\\
&\preceq\left(\sum_{k=0}^{s+1}\sum_{j=0}^{\infty}\|\nabla^{k}u_{j}\|^{p}_{p,\delta-k}\right)^{1/p}\preceq\left(\sum_{j=0}^{\infty}\|u_{j}\|^{p}_{s+1,p,\delta}\right)^{1/p}\\
&\preceq\left(\varepsilon\sum_{j=0}^{\infty}\|u_{j}\|^{p}_{s+2,p,\delta}+\frac{C}{\varepsilon}\sum_{j=0}^{\infty}\|u_{j}\|^{p}_{0,p,\delta}\right)^{1/p},\ \ (\textrm{by}\ \eqref{opw121}),\\
&\preceq\varepsilon\|u\|_{s+2,p,\delta}+\frac{C}{\varepsilon}\|u\|_{0,p,\delta}.
\end{split}
\end{equation*}
This completes the proof. We remark here that the interpolation inequality also holds when $p=\infty$. But we don't need it in this paper.
\end{proof}

\begin{prop}[{\bf Sobolev inequality}]
If $u\in S^{q}_{k,\delta}$, then we have
\begin{equation}\label{opw13}
\|u\|_{\frac{Qp}{Q-kp},\delta}\leq C\|u\|_{k,q,\delta},
\end{equation}
provided that $Q-kp>0$ and $1<p\leq q\leq\frac{Qp}{Q-kp}$.\\
We also have that
\begin{equation}\label{opw14}
\|u\|_{\infty,\delta}\leq C\|u\|_{k,p,\delta},\ \ \textrm{if}\ Q-kp<0.
\end{equation}
and in fact
\begin{equation}\label{opw15}
|u(z,t)|=o(\rho^{\delta})\ \ \textrm{as}\ \ \rho\rightarrow\infty.
\end{equation}
\end{prop}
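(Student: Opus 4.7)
The strategy, following the plan set out by the paper in the preceding discussion, is to reduce the three claims to the unweighted local Folland-Stein Sobolev embedding on the fixed annulus $A_1$ (plus the unit ball $B_1$) and then transport the estimate to the annuli $A_R$, $R = 2^{j-1}$, by the rescaling identity \eqref{opw09}. Because the weight $\sigma$ is comparable to a constant on $A_1$, every weighted norm $\|\cdot\|_{k,p,\delta;A_{1}}$ is comparable to the ordinary Folland-Stein norm on $A_{1}$, so the known local inequality on the Heisenberg group furnishes
\[
\|v\|_{\frac{Qp}{Q-kp},\delta;A_{1}} \leq C \|v\|_{k,p,\delta;A_{1}} \leq C \|v\|_{k,q,\delta;A_{1}},
\]
the second step using $p\leq q$ and boundedness of $A_{1}$. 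Applying this with $v=u_{R}$ and then scaling back via \eqref{opw09} yields the key annular estimate
\[
\|u\|_{\frac{Qp}{Q-kp},\delta;A_{R}} \leq C \|u\|_{k,q,\delta;A_{R}}, \qquad R\geq 1,
\]
with $C$ independent of $R$. The analogous local Morrey-type embedding on $A_{1}$, valid for $Q-kp<0$, gives
\[
\|u\|_{\infty,\delta;A_{R}} \leq C \|u\|_{k,p,\delta;A_{R}}.
\]

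To pass from annular to global inequalities I would use the dyadic decomposition already exploited in the proof of the Interpolation inequality: write $u = u_{0} + \sum_{j\geq 1} u_{j}$ with $u_{0}=u|_{B_{1}}$ and $u_{j}=u|_{A_{2^{j-1}}}$. For \eqref{opw13}, set $p^{*}=Qp/(Q-kp)$; then
\[
\|u\|_{p^{*},\delta}^{p^{*}} = \sum_{j\geq 0} \|u_{j}\|_{p^{*},\delta}^{p^{*}} \leq C\sum_{j\geq 0}\|u_{j}\|_{k,q,\delta}^{p^{*}}.
\]
Since the hypothesis $q\leq p^{*}$ makes $\ell^{q}\hookrightarrow\ell^{p^{*}}$, one then has
\[
\Bigl(\sum_{j\geq 0}\|u_{j}\|_{k,q,\delta}^{p^{*}}\Bigr)^{1/p^{*}} \leq \Bigl(\sum_{j\geq 0}\|u_{j}\|_{k,q,\delta}^{q}\Bigr)^{1/q} \leq C \|u\|_{k,q,\delta},
\]
which is \eqref{opw13}. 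For \eqref{opw14}, taking an essential supremum in place of the $\ell^{p^{*}}$ sum gives
\[
\|u\|_{\infty,\delta} = \sup_{j\geq 0}\|u_{j}\|_{\infty,\delta;A_{2^{j-1}}} \leq C\sup_{j\geq 0}\|u_{j}\|_{k,p,\delta} \leq C \|u\|_{k,p,\delta}.
\]

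For the decay statement \eqref{opw15}, I would apply the annular $L^{\infty}$ estimate to the exterior piece: on each $A_{R}$ (with $R\geq 1$), $\sigma\approx\rho\approx R$, so for $(z,t)\in A_{R}$,
\[
\rho^{-\delta}|u(z,t)| \leq C \|u\|_{\infty,\delta;A_{R}} \leq C \|u\|_{k,p,\delta;E_{R}}.
\]
Because $u\in S^{p}_{k,\delta}$, the tail norm $\|u\|_{k,p,\delta;E_{R}}$ tends to $0$ as $R\to\infty$ (it is the remainder of a convergent series). Hence $\rho^{-\delta}|u(z,t)|\to 0$ as $\rho\to\infty$, proving \eqref{opw15}. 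The only real content is the local Folland-Stein/Morrey embedding on $A_{1}$; the main technical point to be careful with is simply verifying that the scaling exponent $R^{-\delta}$ produced by \eqref{opw09} on the left and right of the annular estimate cancels exactly, so that $C$ is truly independent of $R$, and that the condition $q\leq p^{*}$ is used in the right place to convert the $\ell^{p^{*}}$ sum over dyadic annuli into an $\ell^{q}$ sum that recombines into $\|u\|_{k,q,\delta}$.
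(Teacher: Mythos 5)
Your proposal is correct and follows essentially the same route as the paper: rescaling each dyadic annulus to $A_{1}$ via \eqref{opw09}, invoking the local Folland--Stein (resp.\ Morrey-type) embedding there, summing over annuli using $q\leq p^{*}$ to pass from the $\ell^{p^{*}}$ to the $\ell^{q}$ sum, and deducing \eqref{opw15} from the vanishing of the tail norms together with the annular $L^{\infty}$ bound \eqref{opw141}. No gaps worth noting.
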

\begin{proof}
Suppose that $p^{*}=Qp/(Q-kp)<\infty$, we have
\begin{equation}
\begin{split}
\|u\|_{p^{*},\delta;A_{R}}&=\left(\int_{A_{R}}(\sigma^{-\delta}u)^{p^{*}}\sigma^{-Q}dV_{(z,t)}\right)^{1/p^{*}}\\
&\leq CR^{-\delta}\|u_{R}\|_{p^{*},\delta;A_{1}}\\
&\leq CR^{-\delta}\|u_{R}\|_{k,q,\delta;A_{1}},
\end{split}
\end{equation}
where, for the last inequality, we have used the usual Sobolev inequality applied to $A_{1}$, and by H\"{o}lder's inequality. Rescaling gives
\begin{equation}\label{opw131}
\|u\|_{p^{*},\delta;A_{R}}\leq C\|u\|_{k,q,\delta;A_{R}}.
\end{equation}
Now using the same notation $u_{j}$ as above, we see that
\begin{equation}
\begin{split}
\|u\|_{p^{*},\delta}&=\left(\int_{H_{n}}\|u\|^{p^{*}}\sigma^{-\delta p^{*}-Q}dV_{(z,t)}\right)^{1/p^{*}}\\
&=\left(\sum_{j-0}^{\infty}\int_{A_{2^{j-1}}}|u|^{p^{*}}\sigma^{-\delta p^{*}-Q}dV_{(z,t)}\right)^{1/p^{*}}=\left(\sum_{j-0}^{\infty}\|u\|^{p^{*}}_{p^{*},\delta;A_{2^{j-1}}}\right)^{1/p^{*}}\\
&\leq C\left(\sum_{j-0}^{\infty}\|u\|^{p^{*}}_{k,q,\delta;A_{2^{j-1}}}\right)^{1/p^{*}},\ \ (\textrm{by}\ \eqref{opw131})\\
&\leq C\left(\sum_{j-0}^{\infty}\|u\|^{q}_{k,q,\delta;A_{2^{j-1}}}\right)^{1/q},\ \ q\leq p^{*}\\
&=C\|u\|_{k,q,\delta}.
\end{split}
\end{equation}
Therefore, we obtain \eqref{opw13}. The same rescaling argument implies that
\begin{equation}\label{opw141}
\sup_{A_{R}}|u|\sigma^{-\delta}=\|u\|_{\infty,\delta,A_{R}}\leq C\|u\|_{k,p,\delta;A_{R}},
\end{equation}
which gives \eqref{opw14}. Since $\|u\|_{k,p,\delta}<\infty$, we have
\[\|u\|_{k,p,\delta;A_{R}}\rightarrow 0\ \ \ \textrm{as}\ R\rightarrow\infty,\]
that is,
\[\|u\|_{k,p,\delta;A_{R}}o(1)\ \ \ \textrm{as}\ R\rightarrow\infty,\]
which gives \eqref{opw15}, by means of \eqref{opw141}. We therefore complete the proof.
 
\end{proof}

\subsection{Weighted estimates for sub-elliptic operators}

The rescaling argument and the standard local estimates gives estimates in weighted spaces for sub-elliptic operators whose coefficients are well behaved 
at infinity.

\begin{de}
The operator $u\rightarrow Pu$ defined by
\begin{equation}\label{opw20}
Pu=a^{ij}(z,t)\mathring{e}_{i}\mathring{e}_{j}u+b^{i}(z,t)\mathring{e}_{i}u+c(z,t)u
\end{equation}
will be said to be asymptotic to $\mathring{\Delta}_{b}$ (at rate $\tau$) if there exists $Q<q<\infty$ and $\tau\geq 0$ and constants $C_{1}, \lambda$ such that
\begin{equation}\label{opw21}
\lambda|\zeta|^{2}\leq a^{ij}(z,t)\zeta_{i}\zeta_{j}\leq\lambda^{-1}|\zeta|^{2},\ \ \textrm{for all}\ (z,t)\in H_{n},\ \zeta=\sum_{j=1}^{2n}\zeta_{j}\mathring{e}_{j}\in\xi,
\end{equation}
and
\begin{equation}\label{opw22}
\|a^{ij}-\delta_{ij}\|_{1,q,-\tau}+\|b^{i}\|_{0,q,-1-\tau}+\|c\|_{0,q/2,-2-\tau}\leq  C_{1},
\end{equation}
where $\delta_{ij}$ corresponds to the standard sub-laplacian $\mathring{\Delta}_{b}$ on $H_{n}$.
\end{de}
It is clear that if $P$ is asymptotic to $\mathring{\Delta}_{b}$, then the map $P:S^{p}_{2,\delta}\rightarrow S^{p}_{0,\delta-2}$ is bounded for $1\leq p\leq q$ and $\delta\in\R$. In fact the following weighted estimate holds.

\begin{prop}\label{we01}
Suppose that $P$ is asymptotic to $\mathring{\Delta}_{b}, 1<p\leq q$ and $\delta\in\R$. Then there is a constant $C=C(Q,p,q,s,\delta,C_{1},\lambda)$ such that if $u\in L^{p}_{\delta}$
and $Pu\in S^{p}_{s,\delta-2}$, then $u\in S^{p}_{s+2,\delta}$. Moreover, we have
\begin{equation}\label{opw23}
\|u\|_{s+2,p,\delta}\leq C(\|Pu\|_{s,p,\delta-2}+\|u\|_{p,\delta}).
\end{equation}
\end{prop}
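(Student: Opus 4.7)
The plan is to combine the standard interior sub-elliptic regularity estimate on a bounded annulus with the rescaling identity \eqref{opw09}, summed dyadically over $H_n=B_1\cup\bigcup_{j\ge 1}A_{2^{j-1}}$. The point of working on annuli $A_R$ of size comparable to $R$ is that the weight $\sigma\approx\rho\approx R$ there, so the $\sigma$-weighted norms reduce to unweighted Folland--Stein norms on the fixed annulus $A_1$ after rescaling by $R$, where classical hypoelliptic theory on $H_n$ is available.

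First I would introduce the rescaled operator. Setting $u_R(z,t)=u(Rz,R^2 t)$ and using that $\mathring{e}_j u_R=R(\mathring{e}_j u)_R$, a direct computation gives
\begin{equation*}
P^R v := a^{ij}_R\,\mathring{e}_i\mathring{e}_j v + R\,b^i_R\,\mathring{e}_i v + R^2 c_R\,v, \qquad P^R u_R = R^{2}(Pu)_R,
\end{equation*}
where the subscript $R$ means composition with the dilation. Using \eqref{opw09} with $\delta=-\tau,-1-\tau,-2-\tau$ respectively, the hypothesis \eqref{opw22} translates into
\begin{equation*}
\|a^{ij}_R-\delta_{ij}\|_{1,q;A_1}+\|R\,b^i_R\|_{0,q;A_1}+\|R^2 c_R\|_{0,q/2;A_1}\ \lesssim\ C_1 R^{-\tau}
\end{equation*}
(after absorbing the comparable $\sigma\approx 1$ weight on $A_1$). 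Since $q>Q$, Sobolev embedding forces $a^{ij}_R\to\delta_{ij}$ uniformly on $A_1$ as $R\to\infty$, and the lower-order coefficients become small in the $L^q,\,L^{q/2}$ norms. Combined with \eqref{opw21}, this shows that $P^R$ is a small perturbation of $\mathring{\Delta}_b$ on $A_1$, uniformly in $R\ge 1$.

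Next I would apply the classical local sub-elliptic regularity estimate for operators close to $\mathring{\Delta}_b$ on the slightly enlarged annulus $A_1'\supset A_1$: if $v\in L^p$ and $P^R v\in S^p_s$ on $A_1'$, then $v\in S^p_{s+2}(A_1)$ with
\begin{equation*}
\|v\|_{s+2,p;A_1}\ \le\ C\bigl(\|P^R v\|_{s,p;A_1'}+\|v\|_{p;A_1'}\bigr),
\end{equation*}
the constant $C$ being uniform in $R$ once $R$ is large enough that the perturbation is small. Applied to $v=u_R$ and using $P^R u_R=R^2(Pu)_R$ together with the scaling identity \eqref{opw09} (noting the two factors of $R^{-2}$ that cancel the $R^{2}$ from the equation via the shift $\delta\to\delta-2$), this gives the weighted annular estimate
\begin{equation*}
\|u\|_{s+2,p,\delta;A_R}\ \le\ C\bigl(\|Pu\|_{s,p,\delta-2;A_R'}+\|u\|_{p,\delta;A_R'}\bigr)
\end{equation*}
with $C$ independent of $R\ge R_0$. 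Taking the $p$-th power, summing over the dyadic cover $A_{2^{j-1}}$ (whose enlargements $A_{2^{j-1}}'$ have bounded overlap), and combining with a standard interior estimate on the compact region $B_{2R_0}$ yields \eqref{opw23}. The $L^p_\delta$ assumption on $u$ is what is needed to start the bootstrap, giving first $u\in S^p_{2,\delta}$, and then higher regularity by iterating on $\nabla u$, since the classes $S^p_{s,\delta-2}$ for $Pu$ are preserved under this procedure.

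The main obstacle is the uniformity in $R$ of the local sub-elliptic estimate for $P^R$: one must verify that the smallness of $a^{ij}_R-\delta_{ij}$ in $S^q_{1}(A_1)$, together with the smallness of the rescaled lower-order terms in $L^q, L^{q/2}$, is enough to apply a perturbative Calder\'on--Zygmund argument with the \emph{same} constant for all $R\ge R_0$. This is where the precise scaling $\tau\ge 0$ in \eqref{opw22} is essential and where the choice $q>Q$ is used (both for Sobolev embedding into $C^0$ to ensure subellipticity of $P^R$ on $A_1$ is preserved, and for multiplier estimates in Folland--Stein spaces). Once that uniform local estimate is in hand, the dyadic sum and the handling of the central ball $B_{2R_0}$ by a single classical interior estimate are routine.
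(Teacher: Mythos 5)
Your argument is correct and is essentially the paper's own proof, which is only sketched there: local sub-elliptic regularity to get $u\in S^{p}_{s+2,loc}$, then the interior $L^{p}$ estimates transferred to weighted norms via the dilation identity \eqref{opw09} on dyadic annuli (with the rescaled coefficients controlled uniformly through \eqref{opw22} and $q>Q$), summed together with one estimate on a compact ball. Your write-up simply supplies the details (the rescaled operator $P^{R}$, the $R^{2}$ versus $\delta\to\delta-2$ bookkeeping, the bounded-overlap dyadic sum) that the paper leaves implicit, so no substantive difference.
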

\begin{proof}
Sub-elliptic regularity applies to show that $u\in S^{p}_{s+2,loc}$, and the remaining conclusions follow from the usual interior $L^{p}$ estimates and the rescaling technique.
\end{proof}

Observe that the same argument give the estimate \eqref{opw23} with the $S'^{p}_{k,\delta}$ norms instead. We now investigate the Fredholm properties of $P$. We need the following lemma:

\begin{lem}\label{conbon}
Fix $p\in(1,\infty), p'=p/(p-1)$, and let $a,b\in\R$ be such that $a+b>0$. Suppose that $K'(x,y)$ is the Kernel 
\[K(x,y)=|x|^{-a}|y^{-1}x|^{-Q+a+b}|y|^{-b},\ \ \textrm{for}\ x\neq y,\]
For $u\in L^{p}(H_{n})$ define 
\[Ku(x)=\int_{H_{n}}K(x,y)u(y)dV_{y}.\]
Then there is a constant $c=c(n,p,a,b)$ such that
\[\|Ku\|_{L^{p}}\leq c\|u\|_{L^{p}}\]
if and only if $a<Q/p$ and $b<Q/p'$.
\end{lem}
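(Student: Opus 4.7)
The plan is to prove sufficiency via Schur's test and necessity by exhibiting explicit obstructions. For sufficiency, I would take the weight $w(x)=|x|^{-t}$ with $t$ to be determined, and verify the two Schur bounds
\[
\int_{H_{n}} K(x,y)\,w(y)^{p'}\,dV_{y}\;\leq\;C_{1}\,w(x)^{p'},\qquad \int_{H_{n}} K(x,y)\,w(x)^{p}\,dV_{x}\;\leq\;C_{2}\,w(y)^{p},
\]
which together yield $\|Ku\|_{L^{p}}\leq C_{1}^{1/p'}C_{2}^{1/p}\|u\|_{L^{p}}$ by a standard H\"older argument.

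After factoring out the outer $|x|^{-a}$ (resp. $|y|^{-b}$), each Schur integral reduces to one of the form $J(x)=\int_{H_{n}}|y|^{-\alpha}|y^{-1}x|^{-\beta}\,dV_{y}$. Substituting $y=\delta_{|x|}y'$ and using that $\delta_{r}$ is a group homomorphism and the Kor\'anyi norm is homogeneous of degree $1$, one obtains $J(x)=|x|^{Q-\alpha-\beta}\Phi(\tilde x)$, where $\tilde x=\delta_{|x|^{-1}}x$ lies on the compact unit Kor\'anyi sphere. Provided $\alpha<Q$, $\beta<Q$, and $\alpha+\beta>Q$ (the conditions for pointwise convergence near $y=0$, near $y=x$, and at infinity), $\Phi$ is continuous and bounded, giving the uniform estimate $J(x)\leq C|x|^{Q-\alpha-\beta}$; the analogous reduction for the $dV_{x}$ integral is handled by the substitution $x=yz$. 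Plugging in the concrete exponents shows that both Schur bounds hold whenever
\[
\max\bigl(a/p',\,b/p\bigr)\;<\;t\;<\;\min\bigl((Q-b)/p',\,(Q-a)/p\bigr).
\]
Using $1/p+1/p'=1$, a short arithmetic check shows this interval is nonempty iff $a<Q/p$, $b<Q/p'$, and $a+b<Q$; the last inequality is automatic from the first two since $Q/p+Q/p'=Q$. Pick any admissible $t$ and sufficiency is established.

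For necessity, I would test $K$ on $u=\chi_{B(y_{0},r)}$ with a fixed $y_{0}\neq 0$ and $r<|y_{0}|/2$. As $x\to 0$, dominated convergence gives $\int_{B(y_{0},r)}|y^{-1}x|^{-Q+a+b}|y|^{-b}\,dV_{y}\to\int_{B(y_{0},r)}|y|^{-Q+a}\,dV_{y}>0$, so $Ku(x)\asymp|x|^{-a}$ near the origin; since $|x|^{-a}\notin L^{p}$ on any neighborhood of $0$ when $a\geq Q/p$, this forces $a<Q/p$. To obtain $b<Q/p'$, I invoke duality: the formal adjoint $K^{*}$ has kernel $K(y,x)$, and because $|z^{-1}|=|z|$ on $H_{n}$ we have $|x^{-1}y|=|y^{-1}x|$, so $K^{*}$ has exactly the same form as $K$ with the roles of $a$ and $b$ interchanged. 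Boundedness of $K$ on $L^{p}$ is equivalent to that of $K^{*}$ on $L^{p'}$, so the previous argument applied to $K^{*}$ yields $b<Q/p'$.

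The main technical obstacle is justifying the convolution-of-homogeneous-functions bound $J(x)\leq C|x|^{Q-\alpha-\beta}$ on $H_{n}$: the Kor\'anyi norm is not rotation-invariant in the Euclidean sense, so one cannot simply reduce the integral to a one-variable problem. The homogeneity-plus-compactness argument on the unit Kor\'anyi sphere supplies the required uniform constant, and this is essentially Folland's estimate for convolutions of homogeneous distributions on graded nilpotent groups.
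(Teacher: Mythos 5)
Your proposal is correct and follows essentially the same route as the paper: the sufficiency argument is the Schur test with power weights $|x|^{-t}$ combined with the homogeneity-plus-compactness estimate on the unit Kor\'anyi sphere, which is exactly the paper's weighted H\"older splitting (the paper simply fixes the particular admissible exponent $t=Q/(pp')$, i.e. the factor $(|y|/|x|)^{Q/pp'}$), and your necessity argument tests $K$ on characteristic functions of balls and uses duality/adjoint symmetry, just as the paper does, differing only in that you read off the obstruction from the behavior of $Ku$ near the origin rather than at infinity. Your interval condition $\max(a/p',b/p)<t<\min((Q-b)/p',(Q-a)/p)$ and the check that it is nonempty precisely when $a<Q/p$ and $b<Q/p'$ are correct, and your version even avoids the paper's preliminary reduction to $a,b\geq 0$.
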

\begin{proof}
To show that the conditions $a<Q/p$ and $b<Q/p'$ are necessary, suppose that $K$ is a bounded operator on $L^{p}(H_{n})$. We first define the function 
\[v(x)=\int_{|y|\leq 1}K(x,y)dV_{y}.\]
Then we have
\[v(x)=\int_{|y|\leq 1}K(x,y)dV_{y}=\int_{H_{n}}K(x,y)u(y)dV_{y},\]
where 
\[u(y)=\left\{\begin{array}{cc}1&|y|\leq 1\\0&|y|>1\end{array}\right.\]
Since $u(y)\in L^{p}(H_{n})$ and $K$ is a bounded operator on $L^{p}(H_{n})$, we see that $v(x)$ is in $L^{p}(H_{n})$. Next, we study the behavior of $v(x)$ 
at infinity. By the triangle inequality (see \cite{FS}), there exists a constant $\gamma\geq 1$ such that
\[\frac{1}{\gamma}-\frac{|y|}{|x|}\leq\frac{|y^{-1}x|}{|x|}\leq\gamma(1+\frac{|y|}{|x|}).\] 
Therefore, for $|y|\leq 1$, if $|x|$ is large enough, then $K(x,y)\approx\frac{1}{|x|^{Q-b}|y|^{b}}$, and hence $v(x)$ behaves like a constant multiple of $|x|^{-Q+b}$. Then $v(x)$ belongs to $L^{p}(H_{n})$ only if $p(Q-b)>Q$, i.e. only if $b<Q/p'$. On the other hand, we define the function $w(y)$ by
\[w(y)=\int_{|x|\leq 1}K(x,y)dV_{x}.\]
Then, by Fubini's Theorem and H\"{o}lder's inequality, for $u(y)\in L^{p}(H_{n})$, we have
\[\begin{split}
\int_{H_{n}}w(y)u(y)dV_{y}&=\int_{H_{n}}\left(\int_{|x|\leq 1}K(x,y)u(y)dV_{x}\right)dV_{y}\\
&=\int_{|x|\leq 1}\left(\int_{H_{n}}K(x,y)u(y)dV_{y}\right)dV_{x}\\
&\leq\|Ku(x)\|_{L^{p}(B_{1})}\left(\int_{|x|<1}1\ dV_{x}\right)^{1/p'}\\
&\leq\omega_{n}^{1/p'}\|Ku(x)\|_{L^{p}(H_{n})}\leq c\ \omega_{n}^{1/p'}\|u\|_{L^{p}(H_{n})},
\end{split}\]
for some constant $c>0$, where $\omega_{n}$ is the volume of the unit ball $B_{1}$. That is, $w(y)$ is a bounded linear functional on $L^{p}(H_{n})$, and hence belongs to $L^{p}(H_{n})$. As we discuss the behavior of $v(x)$ above, it is easy to see that, for large $|y|$, $w(y)$ behaves like a constant multiple of $|y|^{-Q+a}$. Then $w(y)$ belongs to $L^{p}(H_{n})$ only if $a<Q/p$.

To show that the conditions $a<Q/p$ and $b<Q/p'$ are sufficient, note first that it may be assumed without loss of generality that both $a$ and $b$ are non-negative. (If, say, $a$ is negative, then $b$ is positive, and from the triangle inequality $|x|\leq\gamma(|y|+|y^{-1}x|)$ for some $\gamma\geq 1$, we have 
\[\frac{|x|}{|y^{-1}x|}\leq\gamma\left(1+\frac{|y|}{|y^{-1}x|}\right),\]
which implies that 
\[\left(\frac{|x|}{|y^{-1}x|}\right)^{-a}\leq C\left(1+\frac{|y|}{|y^{-1}x|}\right)^{-a},\]
for some $C>0$. It is seen that
\[K(x,y)\leq\frac{C}{|y^{-1}x|^{Q-b}|y|^{b}}+\frac{C}{|y^{-1}x|^{Q-a-b}|y|^{a+b}}.\]
The boundedness of $K$ in this case then follows immediately from the result in the case in which both $a$ and $b$ are non-nagative.) And it suffices to show that
\begin{equation}\label{be01}
\int_{H_{n}}\int_{H_{n}}K(x,y)u(y)v(x)dV_{y}dV_{x}\leq C\left(\int_{H_{n}}u(y)^{p}dV_{y}\right)^{1/p}\left(\int_{H_{n}}v(x)^{p'}dV_{x}\right)^{1/p'},
\end{equation}
hence \[Ku(x)=\int_{H_{n}}K(x,y)u(y)dV_{y}\in (L^{p'})^{*}(H_{n})=L^{p}(H_{n})\] with operator norm less than $C\left(\int_{H_{n}}u(y)^{p}dV_{y}\right)^{1/p}$.
By H\"{o}lder's inequailty, and noticing that $K(x,y)$ is homogeneous of degree $-Q$, we have 
\begin{equation}\label{be02}
\begin{split}
&\ \ \ \ \int_{H_{n}}\int_{H_{n}}K(x,y)u(y)v(x)dV_{y}dV_{x}\\
&=\int_{H_{n}}\int_{H_{n}}K^{1/p}u(y)\left(\frac{|y|}{|x|}\right)^{Q/pp'}K^{1/p'}v(x)\left(\frac{|x|}{|y|}\right)^{Q/pp'}dV_{y}dV_{x}\\
&\leq \left[\int_{H_{n}}\int_{H_{n}}\left(K^{1/p}u(y)\left(\frac{|y|}{|x|}\right)^{Q/pp'}\right)^{p}\right]^{1/p}\left[\int_{H_{n}}\int_{H_{n}}\left(K^{1/p'}v(x)\left(\frac{|x|}{|y|}\right)^{Q/pp'}\right)^{p'}\right]^{1/p'}\\
&=P^{1/p}Q^{1/p'},
\end{split}
\end{equation}
where
\begin{equation}\label{be03}
\begin{split}
P&=\int_{H_{n}}\int_{H_{n}}\left(K(x,y)u^{p}(y)\left(\frac{|y|}{|x|}\right)^{Q/p'}\right)dV_{y}dV_{x}\\
&=\int_{H_{n}}u^{p}(y)\left(\int_{H_{n}}K(x,y)\left(\frac{|y|}{|x|}\right)^{Q/p'}dV_{x}\right)dV_{y}\\
\end{split}
\end{equation}
and
\begin{equation}\label{be04}
\begin{split}
Q&=\int_{H_{n}}\int_{H_{n}}\left(K(x,y)v^{p'}(x)\left(\frac{|x|}{|y|}\right)^{Q/p}\right)dV_{y}dV_{x}\\
&=\int_{H_{n}}v^{p'}(x)\left(\int_{H_{n}}K(x,y)\left(\frac{|x|}{|y|}\right)^{Q/p}dV_{x}\right)dV_{x}\\
\end{split}
\end{equation}
From \eqref{be02},\eqref{be03},\eqref{be04}, we obtain \eqref{be01}, provided that the integral $\int_{H_{n}}K(x,y)\left(\frac{|y|}{|x|}\right)^{Q/p'}dV_{x}$ has an upper bound which is independent of $y$ and $\int_{H_{n}}K(x,y)\left(\frac{|x|}{|y|}\right)^{Q/p}dV_{y}$ has an upper bound which is independent of $x$. Actually, fixing $y$ and by a change of variables, we have 
\begin{equation}
\begin{split}
\int_{H_{n}}K(x,y)\left(\frac{|y|}{|x|}\right)^{Q/p'}dV_{x}&=\int_{H_{n}}|y|^{-Q}K(\frac{x}{|y|},\frac{y}{|y|})\left(\frac{|y|}{|x|}\right)^{Q/p'}dV_{x}\\
&=\int_{H_{n}}K(z,y^{*})|z|^{-Q/p'}dV_{z},\ z=\frac{x}{|y|},\ \textrm{and}\ y^{*}=\frac{y}{|y|}\\
&=\int_{H_{n}}\frac{1}{|z|^{a+\frac{Q}{p'}}|(y^{*})^{-1}z|^{Q-(a+b)}}dV_{z},
\end{split}
\end{equation}
where
\begin{equation}
\frac{1}{|z|^{a+\frac{Q}{p'}}|(y^{*})^{-1}z|^{Q-(a+b)}}\approx\left\{\begin{array}{ll}
\frac{1}{|z|^{a+\frac{Q}{p'}}}&,x\approx 0\\
\frac{1}{|(y^{*})^{-1}z|^{Q-(a+b)}}&,x\approx 1\\
\frac{1}{|z|^{Q+\frac{Q}{p'}-b}}&,x\approx \infty
\end{array}\right.
\end{equation}
Since $a<\frac{Q}{p}$ and $b<\frac{Q}{p'}$, we have that $0<a+\frac{Q}{p'}<Q,\ 0<Q-(a+b)<Q$ and $Q+\frac{Q}{p'}-b>Q$, thus the integral $\int_{H_{n}}K(x,y)\left(\frac{|y|}{|x|}\right)^{Q/p'}dV_{x}$ converges, depending on $y^{*}$ which belongs to the unit sphere. Therefore, it has an upper bound which is independent of $y$. Similarly, $\int_{H_{n}}K(x,y)\left(\frac{|x|}{|y|}\right)^{Q/p}dV_{y}$ has an upper bound which is independent of $x$. Therefore, we have completed the proof.
\end{proof}

\begin{thm}\label{keythm}
Suppose that $\delta<0$, $1<p<\infty$, and $s$ is a non-negative integer. Then the map
\begin{equation}\label{opw24}
\mathring{\Delta}_{b}:S'^{p}_{s+2,\delta}\rightarrow S'^{p}_{s,\delta-2}
\end{equation}
is an isomorphism and there is a constant $C=C(Q,p,\delta,s)$ such that
\begin{equation}\label{opw25}
\|u\|'_{s+2,p,\delta}\leq C\|\mathring{\Delta}_{b}u\|'_{s,p,\delta-2}.
\end{equation}
\end{thm}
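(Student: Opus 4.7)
The plan is to construct the inverse of $\mathring{\Delta}_{b}$ explicitly via convolution with Folland's fundamental solution on $H_{n}$, and then to deduce the estimate in two stages: the $L'^{p}_{\delta}$ bound via Lemma \ref{conbon}, and the higher-order bound via Proposition \ref{we01}.

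First recall Folland's theorem: $\mathring{\Delta}_{b}$ admits a fundamental solution $\Gamma(x)=-c_{Q}\rho(x)^{-(Q-2)}$. For $f\in C_{0}^{\infty}(H_{n}\setminus\{0\})$, which is dense in $S'^{p}_{s,\delta-2}$ since $\delta-2<0$, I would set
\[
u(x) \;=\; -c_{Q}\int_{H_{n}}|y^{-1}x|^{-(Q-2)}f(y)\,dV(y),
\]
so that $\mathring{\Delta}_{b}u=f$ pointwise. For the weighted $L^{p}$ estimate, substitute $F(y)=\rho(y)^{-(\delta-2)-Q/p}f(y)$ and $G(x)=\rho(x)^{-\delta-Q/p}u(x)$; then $\|F\|_{L^{p}(dV)}=\|f\|'_{p,\delta-2}$ and $\|G\|_{L^{p}(dV)}=\|u\|'_{p,\delta}$, and the transformed kernel
\[
\tilde K(x,y) \;=\; \rho(x)^{-(\delta+Q/p)}\,|y^{-1}x|^{-(Q-2)}\,\rho(y)^{(\delta-2)+Q/p}
\]
matches the kernel of Lemma \ref{conbon} with $a=\delta+Q/p$, $b=2-\delta-Q/p$, so $a+b=2>0$. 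The sharp conditions $a<Q/p$ and $b<Q/p'$ translate to $\delta<0$ and $\delta>2-Q$ respectively, and the lemma then yields $\|u\|'_{p,\delta}\le C\|f\|'_{p,\delta-2}$.

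Next, since $\mathring{\Delta}_{b}$ is trivially asymptotic to itself (all the defining norms in \eqref{opw22} vanish for any admissible $q$), the primed-norm version of Proposition \ref{we01} upgrades the $L^{p}$ bound to
\[
\|u\|'_{s+2,p,\delta}\;\le\;C\bigl(\|\mathring{\Delta}_{b}u\|'_{s,p,\delta-2}+\|u\|'_{p,\delta}\bigr)\;\le\;C\|f\|'_{s,p,\delta-2},
\]
which gives \eqref{opw25} as well as surjectivity after extending by density. For injectivity, if $\mathring{\Delta}_{b}u=0$ with $u\in S'^{p}_{s+2,\delta}$, hypoellipticity of the sub-Laplacian yields smoothness, the Sobolev decay \eqref{opw15} forces $u=o(\rho^{\delta})\to 0$ at infinity, and the maximum principle applied to both $u$ and $-u$ then gives $u\equiv 0$.

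The main obstacle is the weighted $L^{p}$ boundedness of convolution with $\Gamma$. Everything rests on identifying the correct exponents so that Lemma \ref{conbon} applies, and on observing that its sharp range $a<Q/p$, $b<Q/p'$ precisely characterises the admissible weights; this calculation also suggests that the stated hypothesis $\delta<0$ should in fact be strengthened to $2-Q<\delta<0$ for the argument as it stands. Once the $L^{p}$ bound is secured, the remaining steps (elliptic regularity, density, uniqueness by maximum principle) are essentially bookkeeping.
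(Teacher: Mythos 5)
Your core argument coincides with the paper's own first stage: for $2-Q<\delta<0$ the paper also inverts $\mathring{\Delta}_{b}$ by convolution with the fundamental solution $c\,|y^{-1}x|^{-(Q-2)}$, conjugates by the weights exactly as you do so that Lemma \ref{conbon} applies with $a=\delta+Q/p$, $b=2-\delta-Q/p$, and then upgrades the weighted $L^{p}$ bound to the $S'^{p}_{s+2,\delta}$ bound via the primed version of Proposition \ref{we01} (i.e.\ \eqref{opw23} with primed norms). Two points, however, keep your proposal from being a proof of the statement as written. First, the theorem asserts the isomorphism for \emph{every} $\delta<0$, while your argument (as you yourself note) only reaches $2-Q<\delta<0$; the paper has a second stage for $\delta\le 2-Q=-2n$, based on the continuous inclusions $S'^{p}_{s+2,\delta_{2}}\subset S'^{p}_{s+2,\delta_{1}}$ for $\delta_{2}<\delta_{1}$, a self-adjointness/duality argument for surjectivity, and a compactness argument for the estimate \eqref{opw25}. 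Proposing to strengthen the hypothesis is not a proof of the stated theorem: you would either have to supply an argument for $\delta\le 2-Q$ (at least for nonexceptional weights, where such extensions are expected, in the spirit of the Euclidean theory in \cite{Bar}), or exhibit a counterexample justifying the restriction; you do neither. Your skepticism is not unreasonable---at weights $\delta=2-Q-k$, $k\ge 0$, isomorphism statements of this type generally fail---but then the substantive issue is exactly which $\delta\le 2-Q$ survive, and that is precisely the part of the argument you have omitted.

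Second, your injectivity step has a hole at the origin. Elements of $S'^{p}_{s+2,\delta}$ live on $H_{n}\setminus\{0\}$ and satisfy $\mathring{\Delta}_{b}u=0$ only there; for $\delta<0$ the $\rho$-weight permits blow-up of $u$ near $0$ (at rates up to roughly $\rho^{\delta}$ in an $L^{p}$-averaged sense), so invoking decay at infinity via \eqref{opw15} (which, incidentally, is proved for the $\sigma$-weighted spaces, though the two weights are comparable at infinity) together with the maximum principle is not enough: you must also rule out a singularity at the origin, e.g.\ by a removable-singularity argument, which is available precisely because $\delta>2-Q$ makes the admissible blow-up strictly slower than the fundamental solution $\rho^{2-Q}$. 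The paper sidesteps this entirely: since $K_{0}\mathring{\Delta}_{b}u=u$ for $u\in C_{0}^{\infty}(H_{n}\setminus\{0\})$ and this class is dense in $S'^{p}_{2,\delta}$ for $\delta<0$, the boundedness of $K_{0}$ immediately yields the a priori estimate \eqref{opw25} for all $u$, which gives injectivity and closed range with no maximum principle at all. You already have every ingredient for that cleaner route; restructure the endgame accordingly, and then address the case $\delta\le 2-Q$ directly.
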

\begin{proof}
It suffices to prove \eqref{opw25} for $s=0$. We first assume that $-2n<\delta<0$ and show that the distribution inverse has convolution kernel $K_{0}(x,y)$:
\begin{equation}\label{invop}
c_{0}K_{0}(x,y)=c_{0}\Phi_{0}(y^{-1}x)=|y^{-1}x|^{-2n}.
\end{equation}
Let us first show that \eqref{invop} defines a bounded operator from $S'^{p}_{0,\delta-2}$ to $S'^{p}_{0,\delta}$. Lemma \ref{conbon} shows that the kernel 
\[K(x,y)=|x|^{-\delta-\frac{2n+2}{p}}K_{0}(x,y)|y|^{\delta-2+\frac{2n+2}{p}}\]
defines a bounded operator $L^{p}\rightarrow L^{p}$ when $-2n<\delta< 0$. Then we have
\begin{equation}
\begin{split}
\|K_{0}u\|'_{p,\delta}&=\left(\int_{H_{n}\setminus\{0\}}|K_{0}u(x)|^{p}|x|^{-\delta p-Q}dV_{x}\right)^{1/p}\\
&=\left(\int_{H_{n}\setminus\{0\}}\left|\int_{H_{n}\setminus\{0\}}|x|^{-\delta -\frac{Q}{p}}K_{0}(x,y)u(y)dV_{y}\right|^{p}dV_{x}\right)^{1/p}\\
&=\left(\int_{H_{n}\setminus\{0\}}\left|(KU)(x)\right|^{p}dV_{x} \right)^{1/p},\ \ \textrm{where}\ U(y)=u(y)|y|^{-(\delta-2)-\frac{Q}{p}}\\
&=\|KU\|_{L^{p}}\leq c\|U\|_{L^{p}}\ \ (\textrm{by Lemma}\ \ref{conbon})\\
&=c\|u\|'_{p,\delta-2}.
\end{split}
\end{equation}
Thus we have shown that $K_{0}:S'^{p}_{0,\delta-2}\rightarrow S'^{p}_{0,\delta}$ is bounded when $-2n<\delta< 0$. 
Recall that we have $K_{0}\mathring{\Delta}_{b}u=u$, for all $u\in C^{\infty}_{0}(H_{n}\setminus\{0\})$, so the boundness of $K_{0}$
gives 
\[\|u\|'_{p,\delta}\leq C\|\mathring{\Delta}u\|'_{p,\delta-2},\ \ \ \textrm{for all}\ u\in S'^{p}_{0,\delta},\]
since $C^{\infty}_{0}(H_{n}\setminus\{0\})$ is dense. This and estimate \eqref{opw23} yield \eqref{opw25}.

Now suppose that $\{u_{j}\}\subset S'^{p}_{2,\delta}, \{f_{j}\}\subset S'^{p}_{0,\delta-2}$ are sequences such that 
$f_{j}=\mathring{\Delta}u_{j}$ and $f_{j}\rightarrow f$.  By \eqref{opw25}, $\{u_{j}\}$ is a Cauchy sequence and hence is convergent 
to $u\in S'^{p}_{2,\delta}$ with $\mathring{\Delta}u=f$, since $\mathring{\Delta}:S'^{p}_{2,\delta}\rightarrow S'^{p}_{0,\delta-2}$ is bounded.
This map thus has closed range. On the other hand, by \eqref{opw25}, the kernel of this map is trivial. Also, by \eqref{opw25}, we have 
$K_{0}f\in S'^{p}_{2,\delta}$, and hence $\mathring{\Delta}(K_{0}f)(x)=f$, for all $f\in C_{0}^{\infty}(H_{n}\setminus\{0\})$, so it is also surjective 
and this establishes the isomorphism for the cases $-2n<\delta<0$. For the other cases $\delta\leq -2n$, we need the following commutative diagram
\begin{equation*}
\begin{array}{cccc}
\mathring{\Delta}_{b}:&S'^{p}_{s+2,\delta_{1}}&\longrightarrow&S'^{p}_{s,\delta_{1}-2}\\
&\uparrow&&\uparrow\\
\mathring{\Delta}_{b}:&S'^{p}_{s+2,\delta_{2}}&\longrightarrow&S'^{p}_{s,\delta_{2}-2}
\end{array}
\end{equation*}
where $\delta_{2}<\delta_{1}$ and the vertical maps are the inclusions, which, by Proposition \ref{ebe1}, are continuous. Suppose that $\mathring{\Delta}_{b}:S'^{p}_{s+2,\delta_{1}}\longrightarrow S'^{p}_{s,\delta_{1}-2}$ is isomorphic, then from the above diagram, it is easy to see that the map $\mathring{\Delta}_{b}:S'^{p}_{s+2,\delta_{2}}\longrightarrow S'^{p}_{s,\delta_{2}-2}$ is one-to-one. Since $\mathring{\Delta}_{b}$ is self-adjoint, the map $\mathring{\Delta}_{b}:S'^{p}_{s+2,\delta_{2}}\longrightarrow S'^{p}_{s,\delta_{2}-2}$ is onto, and hence isomorphic. We claim that \eqref{opw25} holds for $\delta_{2}$.
For a contradiction, we assume that \eqref{opw25} does not hold for $\delta_{2}$. Then there exists a sequence of $u_{j}\in S'^{p}_{s+2,\delta_{2}}$ such that 
$\|u_{j}\|'_{s+2,p,\delta_{2}}=1$ and 
\begin{equation}
\|\mathring{\Delta}_{b}u_{j}\|'_{s,p,\delta_{2}-2}<\frac{1}{j}.
\end{equation}
Thus there exists a subsequence $\{u_{j}\}$ converges weakly to $u_{\infty}$ such that $\|u_{\infty}\|'_{s+2,p,\delta_{2}}=1$ and $\|\mathring{\Delta}_{b}u_{\infty}\|'_{s,p,\delta_{2}-2}=0$, which implies that $u_{\infty}=0$, due to that $\mathring{\Delta}_{b}$ is one-to-one. We therefore get a contradiction, and thus have \eqref{opw25} for all $\delta<0$.
\end{proof}

The scale-broken estimate \eqref{opw26} below is the key to proving Fredholm properties.

\begin{thm}\label{kefp}
Suppose that $P$ is asymptotic to $\mathring{\Delta}_{b}$ at rate $\tau>0$ and $Q<q<\infty$, and $\delta<0$, $s$ is a non-negative integer. 
Then, for $1<p\leq q$, the map
\[P:S^{p}_{s+2,\delta}\rightarrow S^{p}_{s,\delta-2}\]
has finite-dimensional kernel and closed range. In addition, for any $u\in S^{p}_{s+2,\delta}$, we have 
\begin{equation}\label{opw26}
\|u\|_{s+2,p,\delta}\leq C(\|Pu\|_{s,p,\delta-2}+\|u\|_{L^{p}(B_{R})}),
\end{equation}
where $C,R$ are constants depending on $P,\delta,Q,p,q$.
\end{thm}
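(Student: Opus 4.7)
The plan is to establish the scale-broken estimate \eqref{opw26} first and then deduce the finite-dimensional kernel and closed range from it by a standard Rellich compactness argument. Write $P = \mathring{\Delta}_b + E$, where
\[
Eu = (a^{ij}-\delta^{ij})\mathring{e}_{i}\mathring{e}_{j}u + b^{i}\mathring{e}_{i}u + cu,
\]
so $E$ is the asymptotically small part governed by \eqref{opw22}. Pick a smooth cutoff $\chi_{R}$ with $\chi_{R} \equiv 0$ on $B_{R}$ and $\chi_{R}\equiv 1$ on $E_{2R}$. Since $\chi_{R}u$ is supported in $E_{R}$, the weights $\sigma$ and $\rho$ are comparable there, so the unprimed and primed norms agree up to constants, and Theorem \ref{keythm} applies to $\chi_{R}u$:
\[
\|\chi_{R}u\|_{s+2,p,\delta} \le C \|\mathring{\Delta}_{b}(\chi_{R}u)\|_{s,p,\delta-2}.
\]

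Expanding $\mathring{\Delta}_{b}(\chi_{R}u) = \chi_{R}Pu - \chi_{R}Eu + [\mathring{\Delta}_{b},\chi_{R}]u$ and bounding term by term is the heart of the proof. The commutator $[\mathring{\Delta}_{b},\chi_{R}]u$ is supported in $A_{R}$ and contains only $u$ and $\mathring{\nabla}u$, so it is controlled by $\|u\|_{s+1,p,\delta;A_{R}}$, which by the interpolation inequality \eqref{opw12} (applied on $A_{R}$, or absorbed later using Proposition \ref{we01}) is dominated by $\varepsilon\|u\|_{s+2,p,\delta} + C_{R}\|u\|_{L^{p}(B_{2R})}$. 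For the error term $\chi_{R}Eu$, one applies the weighted H\"{o}lder inequality \eqref{opw11} using \eqref{opw22}: the coefficients $a^{ij}-\delta^{ij}$, $b^{i}$, $c$ live in weighted $L^{q}$, $L^{q}$, $L^{q/2}$ spaces with weights $-\tau$, $-1-\tau$, $-2-\tau$ respectively, which by direct pairing with $\mathring{\nabla}^{2}u$, $\mathring{\nabla}u$, $u$ in $L^{p}_{\delta}$, $L^{p}_{\delta-1}$, $L^{p}_{\delta-2}$ yields contributions in $L^{p}_{\delta-2}$ with an extra factor carrying weight $-\tau$. Since $\tau>0$, the tails of these $L^{q}$ coefficient norms over $E_{R}$ tend to $0$ as $R\to\infty$ by absolute continuity of the integral, giving
\[
\|\chi_{R}Eu\|_{s,p,\delta-2} \le \varepsilon(R)\|u\|_{s+2,p,\delta}, \qquad \varepsilon(R)\to 0.
\]
Combine this with the interior sub-elliptic estimate from Proposition \ref{we01} applied on $B_{2R}$ to control $(1-\chi_{R})u$, choose $R$ large so $C\varepsilon(R)<\tfrac{1}{2}$, and absorb to obtain \eqref{opw26}.

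For the Fredholm-type conclusions, recall that the local Folland-Stein embedding $S^{p}_{s+2}(B_{R}) \hookrightarrow L^{p}(B_{R})$ is compact. If $u\in\ker P$, then \eqref{opw26} reduces to $\|u\|_{s+2,p,\delta} \le C\|u\|_{L^{p}(B_{R})}$, so the unit ball of $\ker P$ is relatively compact in $S^{p}_{s+2,\delta}$ and hence $\ker P$ is finite-dimensional. For closed range, let $\{u_{j}\}\subset S^{p}_{s+2,\delta}$, chosen modulo $\ker P$, satisfy $Pu_{j}\to f$. A contradiction argument using \eqref{opw26} together with the compactness of restriction to $B_{R}$ shows that $\{u_{j}\}$ must be bounded in $S^{p}_{s+2,\delta}$ (otherwise, renormalizing produces a convergent subsequence whose limit is a nontrivial kernel element, contradicting orthogonality to $\ker P$); then another application of the same compactness–plus–\eqref{opw26} argument extracts a convergent subsequence with limit $u$ satisfying $Pu=f$.

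The main obstacle is the bookkeeping in the error estimate: one must verify that the weighted H\"{o}lder pairing matches the target weight $\delta-2$ precisely and leaves a genuine factor of $\sigma^{-\tau}$ to exploit. The strict positivity $\tau>0$ is essential here; it is exactly what makes the exterior contribution of the coefficient norms vanish in the limit $R\to\infty$ and thus allows the absorption into the left-hand side. Without it, the strategy would produce only a global estimate $\|u\|_{s+2,p,\delta} \le C\|Pu\|_{s,p,\delta-2} + C\|u\|_{p,\delta}$ with no compact lower-order term, insufficient for Fredholm properties.
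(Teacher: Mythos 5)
Your overall architecture is the same as the paper's: cut off near infinity, apply the sharp estimate \eqref{opw25} of Theorem \ref{keythm} to the exterior piece, treat $P-\mathring{\Delta}_{b}$ as a perturbation that is small on $E_{R}$, absorb it for $R$ large, handle the transition annulus with interpolation \eqref{opw12} and the global estimate \eqref{opw23}, and then get finite-dimensional kernel and closed range from \eqref{opw26} plus Rellich compactness on $B_{R}$ (the closed-range step, done modulo a complement of $\ker P$ by contradiction, is exactly the paper's argument). So the plan is right; the issue is the mechanism you propose at the step you yourself call the heart of the proof.

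The estimate of $\chi_{R}Eu$ cannot be done by ``direct pairing'' of the coefficients in weighted $L^{q}$, $L^{q}$, $L^{q/2}$ against $\mathring{\nabla}^{2}u$, $\mathring{\nabla}u$, $u$ in weighted $L^{p}$: the H\"older inequality \eqref{opw11} sends $L^{q}_{\delta_{1}}\times L^{p}_{\delta_{2}}$ into $L^{r}_{\delta_{1}+\delta_{2}}$ with $\tfrac1r=\tfrac1q+\tfrac1p>\tfrac1p$, so the product lands in a strictly worse integrability class than the target $L^{p}_{\delta-2}$, and the estimate does not close. The paper's proof (see \eqref{opw29}--\eqref{opw31}) repairs this in two different ways depending on the order of the term, and you need both. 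For the top-order term $(a^{ij}-\delta_{ij})\mathring{e}_{i}\mathring{e}_{j}u$ there is no Sobolev room at all, since $\mathring{\nabla}^{2}u$ is already the highest derivative in $S^{p}_{2,\delta}$; the only way out is an $L^{\infty}$ bound, i.e. $\sup_{E_{R}}|a^{ij}-\delta_{ij}|\cdot\|\mathring{\nabla}^{2}u\|_{p,\delta-2;E_{R}}$, where the sup is finite and tends to $0$ because $a^{ij}-\delta_{ij}\in S^{q}_{1,-\tau}$ with $q>Q$ gives, via \eqref{opw14}--\eqref{opw15}, the pointwise decay $o(\rho^{-\tau})$ — note this smallness is pointwise decay, not ``absolute continuity of the integral,'' and it is precisely why the definition of asymptotic requires $q>Q$ and first-order control of $a^{ij}$. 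For the lower-order terms one first upgrades $\mathring{\nabla}u$ (resp. $u$) with the weighted Sobolev inequality \eqref{opw13} to $L^{Qp/(Q-p)}_{\delta-1}$ (resp. the analogous space), pairs with $b^{i}\in L^{Q}_{-1}$, $c\in L^{Q/2}_{-2}$ so that the exponents and weights match $L^{p}_{\delta-2}$ exactly, and only then uses the embedding \eqref{opw10} to dominate $\|b^{i}\|_{Q,-1;E_{R}}$ and $\|c\|_{Q/2,-2;E_{R}}$ by the tails $\|b^{i}\|_{q,-1-\tau;E_{R}}$, $\|c\|_{q/2,-2-\tau;E_{R}}$, which do vanish as $R\to\infty$. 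With the error estimate rewritten this way, the rest of your argument goes through as in the paper.
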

\begin{proof}
It suffices to prove \eqref{opw26} for $s=0$. Define the operator norm
\[\|P-\mathring{\Delta}_{b}\|_{op}=\sup\{\|(P-\mathring{\Delta}_{b})u\|_{p,\delta-2}\ :\ u\in S^{p}_{2,\delta},\ \|u\|_{2,p,\delta}=1\}\]
and let $\|\cdot\|_{op,R}$ denote the same norm restricted to function with support in $E_{R}=H_{n}\setminus B_{R}$.
For \eqref{opw26}, first, we claim that 
\begin{equation}\label{opw27}
\|P-\mathring{\Delta}_{b}\|_{op,R}=o(1),\ \ \textrm{as}\ R\rightarrow\infty.
\end{equation}
If $\supp{u}\subset E_{R}$, since $Q<q$, 
\begin{equation} \label{opw28}
\begin{split}
&\|(P-\mathring{\Delta}_{b})u\|_{p,\delta-2}\\
\leq &\|(a^{ij}(z,t)-\delta_{ij})\partial_{ij}u+b^{i}(z,t)\partial_{i}u+c(z,t)u\|_{p,\delta-2}\\
\leq &\|(a^{ij}(z,t)-\delta_{ij})\partial_{ij}u\|_{p,\delta-2;E_{R}}+\|b^{i}(z,t)\partial_{i}u\|_{p,\delta-2;E_{R}}+\|c(z,t)u\|_{p,\delta-2;E_{R}},
\end{split}
\end{equation}
where
\begin{equation} \label{opw29}
\|(a^{ij}(z,t)-\delta_{ij})\partial_{ij}u\|_{p,\delta-2;E_{R}}
\leq \sup_{|(z,t)|>R}\left\{|a^{ij}(z,t)-\delta_{ij}|\right\}\|\partial_{ij}u\|_{p,\delta-2;E_{R}};
\end{equation}

\begin{equation} \label{opw30}
\begin{split}
\|b^{i}(z,t)\partial_{i}u\|_{p,\delta-2;E_{R}}&\leq \|b^{i}\|_{Q,-1;E_{R}}\|\partial_{i}u\|_{\frac{Qp}{Q-p},\delta-1},\ \ \textrm{by}\ \eqref{opw11}\\
&\leq C\|b^{i}\|_{Q,-1;E_{R}}\|\partial_{i}u\|_{1,p,\delta-1},\ \ \textrm{by}\ \eqref{opw13}\\
&\leq C\|b^{i}\|_{q,-1-\tau;E_{R}}\|\partial_{i}u\|_{1,p,\delta-1},\ \ \textrm{by}\ \eqref{opw10};
\end{split}
\end{equation}
and, similarly, we have
\begin{equation} \label{opw31}
\|c(z,t)u\|_{p,\delta-2;E_{R}}\leq C\|c\|_{q/2,-2-\tau;E_{R}}\|u\|_{2,p,\delta}.
\end{equation}
Substituting \eqref{opw29},\eqref{opw30} and \eqref{opw31} into \eqref{opw28}, we obtain 
\begin{equation*}
\begin{split}
\|(P-\mathring{\Delta}_{b})u\|_{p,\delta-2}&\leq \sup_{|(z,t)|>R}\left\{|a^{ij}(z,t)-\delta_{ij}|\right\}\|\partial_{ij}u\|_{p,\delta-2;E_{R}}\\
&+ C\|b^{i}\|_{q,-1-\tau;E_{R}}\|\partial_{i}u\|_{1,p,\delta-1}+C\|c\|_{q/2,-2-\tau;E_{R}}\|u\|_{2,p,\delta},
\end{split}
\end{equation*}
and hence \eqref{opw27} holds.

Let $\chi\in C_{0}^{\infty}(B_{2})$ be a patch function, $0\leq\chi\leq 1,\chi=1$ in $B_{1}$, and set $\chi_{R}(x)=\chi(R^{-1}x)$. Writting
$u=u_{0}+u_{\infty},\ u_{0}=\chi_{R}u,u_{\infty}=(1-\chi_{R})u$ with $R$ a constant to be determined, the sharp estimate \eqref{opw25} yields 
\begin{equation*}
\begin{split}
\|u_{\infty}\|_{2,p,\delta}&\leq C\|\mathring{\Delta}_{b}u_{\infty}\|_{0,p,\delta-2}\\
&\leq C(\|Pu_{\infty}\|_{0,p,\delta-2}+\|(P-\mathring{\Delta}_{b})u_{\infty}\|_{0,p,\delta-2})\\
&\leq C(\|Pu_{\infty}\|_{0,p,\delta-2}+\|P-\mathring{\Delta}_{b}\|_{op,R}\|u_{\infty}\|_{2,p,\delta}),
\end{split}
\end{equation*}  
in which the second term of the right hand side can be absorbed into the left hand side, for $R$ sufficiently large, and 
we estimate
\begin{equation*}
\begin{split}
\|Pu_{\infty}\|_{0,p,\delta-2}&=\|Pu-Pu_{0}\|_{0,p,\delta-2}\\
&\leq \|Pu\|_{0,p,\delta-2}+\|2a^{ij}(\partial_{i}u)(\partial_{j}\chi_{R})+(a^{ij}\partial_{ij}\chi_{R}+b^{i}\partial_{i}\chi_{R})u\|_{0,p,\delta-2;A_{R}}\\
&\leq \|Pu\|_{0,p,\delta-2}+C\|u\|_{1,p,\delta;A_{R}}.
\end{split}
\end{equation*}  
We therefore get
\[\|u_{\infty}\|_{2,p,\delta}\leq C(\|Pu\|_{0,p,\delta-2}+\|u\|_{1,p,\delta;A_{R}},\]
for $R$ large enough. Using this \eqref{opw23} and the interpolation inequality \eqref{opw12} gives \eqref{opw26}.
Now suppose that $\{u_{j}\}$ is a sequence in Ker$P$ satisfying $\|u_{j}\|_{2,p,\delta}=1$, so that by the Rellich lemma 
we may assume, without loss of generality, that $\{u_{j}\}$ converges strongly in $L^{p}(B_{R})$. Estimate \eqref{opw26} now shows that 
 $\{u_{j}\}$ is a Cauchy sequence and hence convergent in $S^{p}_{2,\delta}$ which implies that Ker$P$ is finite-dimensional. To show that
 $P$ has closed range, as the argument in Theorem \ref{keythm}, it suffices to show that there is a constant $C$ such that 
 \[\|u\|_{2,p,\delta}\leq C\|Pu\|_{0,p,\delta-2},\ \ \ \textrm{for all}\ u\in Z,\]
 where $Z$ is a closed subspace such that $S^{p}_{2,\delta}=Z+\textrm{Ker}P$. For if this were not the case, there would be a sequence 
 $\{u_{j}\}\subset Z$ such that $\|u_{j}\|_{2,p,\delta}=1$ and $\|Pu\|_{0,p,\delta-2}\rightarrow 0$. The usual Rellich lemma applied to 
 \eqref{opw26} shows that $\{u_{j}\}$ has a subsequence which is Cauchy in $Z$ and whose limit is a non-zero element of Ker$P\cap Z$, which is 
 a contradiction. Therefore, $P$ has closed range.
\end{proof}
We are interested in the dimension of the kernel of $P$, which will be denoted by
\begin{equation}
N(P,\delta)=\textrm{dim\ ker}(P:S^{p}_{2,\delta}\rightarrow S^{p}_{0,\delta-2})
\end{equation}
with $1<p\leq q$, where $Q=2n+2<q<\infty$.  

\begin{prop}\label{mainres}
Suppose that $P$ and its formal adjoint $P^{*}$ both satisfy conditions \eqref{opw21} and \eqref{opw22} with $Q<q<\infty$, and $-2n<\delta<0,\ \frac{q}{q-1}<p\leq q$. Then $P:S^{p}_{2,\delta}\rightarrow S^{p}_{0,\delta-2}$ is a Fredholm operator with Fredholm index $\iota(P,\delta)=N(P,\delta)-N(P^{*},2-Q-\delta)$.
\end{prop}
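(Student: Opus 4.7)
The strategy is to combine Theorem~\ref{kefp} with a duality argument that identifies $\operatorname{coker}P$ with $\ker P^{*}$. Theorem~\ref{kefp} applied to $P$ already provides that $\ker P$ is finite-dimensional and that $\operatorname{Range}P$ is closed in $S^{p}_{0,\delta-2}$. The hypothesis $-2n<\delta<0$ forces $2-Q-\delta<0$, and $q/(q-1)<p\le q$ forces $p'\le q$ with $p'=p/(p-1)$, so Theorem~\ref{kefp} is also available for $P^{*}$ regarded as a map $S^{p'}_{2,2-Q-\delta}\to S^{p'}_{0,-Q-\delta}$; in particular $\ker P^{*}$ is finite-dimensional and $\operatorname{Range}P^{*}$ is closed.

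The core duality step goes as follows. The substitution $\tilde u=\sigma^{-(\delta-2)-Q/p}u$, $\tilde v=\sigma^{-(2-Q-\delta)-Q/p'}v$ converts the plain integral pairing $\langle u,v\rangle=\int_{H_{n}}uv\,dV$ into the usual $L^{p}$--$L^{p'}$ pairing of $\tilde u$ and $\tilde v$, which identifies the dual of $L^{p}_{\delta-2}$ with $L^{p'}_{2-Q-\delta}$ under this pairing. Since $\operatorname{Range}P$ is closed, $\operatorname{coker}P$ is naturally isomorphic to the annihilator
\[
\bigl\{v\in L^{p'}_{2-Q-\delta}\ :\ \langle Pu,v\rangle=0\ \text{for all}\ u\in S^{p}_{2,\delta}\bigr\}.
\]
As $\delta<0$, $C_{0}^{\infty}(H_{n})$ is dense in $S^{p}_{2,\delta}$, so testing against smooth compactly supported $u$ and integrating by parts shows that every such $v$ satisfies $P^{*}v=0$ in the distributional sense on $H_{n}$.

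The remaining task is to bootstrap each such distributional solution to an element of $S^{p'}_{2,2-Q-\delta}$, so that the annihilator coincides precisely with $\ker(P^{*}:S^{p'}_{2,2-Q-\delta}\to S^{p'}_{0,-Q-\delta})$. Interior sub-elliptic regularity, used in the proof of Proposition~\ref{we01}, applied to $P^{*}$ and the equation $P^{*}v=0$, gives $v\in S^{p'}_{2,\mathrm{loc}}$; the scaling estimate \eqref{opw23} then upgrades this to the global weighted statement $v\in S^{p'}_{2,2-Q-\delta}$. Putting the ingredients together yields $\dim\operatorname{coker}P=N(P^{*},2-Q-\delta)<\infty$, and together with the closed range and finite-dimensional kernel this shows that $P$ is Fredholm with the desired index formula $\iota(P,\delta)=N(P,\delta)-N(P^{*},2-Q-\delta)$.

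The main obstacle I expect is this regularity/globalisation step: one must verify that Proposition~\ref{we01} and the scaling estimate \eqref{opw23} apply symmetrically to $P^{*}$ under the hypotheses \eqref{opw21}--\eqref{opw22} imposed on $P^{*}$, and that the local $S^{p'}_{2,\mathrm{loc}}$ bounds assemble into the weighted estimate at the correct rate $2-Q-\delta$. The hypothesis that $P^{*}$ itself (not merely $P$) satisfies \eqref{opw21}--\eqref{opw22} is precisely what makes this step work, and it is also what provides the integration-by-parts identity needed to pass from the annihilator condition to the equation $P^{*}v=0$.
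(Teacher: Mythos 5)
Your proposal is correct and follows essentially the same route as the paper: Theorem~\ref{kefp} for finite kernel and closed range, the weighted H\"older pairing identifying the dual of $L^{p}_{\delta-2}$ with $L^{p'}_{2-Q-\delta}$, and the weighted regularity estimate of Proposition~\ref{we01} to place cokernel representatives (distributional solutions of $P^{*}v=0$) in $S^{p'}_{2,2-Q-\delta}$, with finiteness of $N(P^{*},2-Q-\delta)$ again from Theorem~\ref{kefp} applied to $P^{*}$ since $-2n<2-Q-\delta<0$ and $1<p'\le q$. You merely spell out the annihilator/density/bootstrap details that the paper leaves implicit.
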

\begin{proof}
Suppose that the formal adjoint
\[P^{*}:(S^{p}_{0,\delta-2})^{*}\rightarrow(S^{p}_{2,\delta})^{*}\]
of $P$ is also asymptotic to $\mathring{\Delta}_{b}$ which satisfies the condition \eqref{opw21} and \eqref{opw22}. Here $(S^{p}_{2,\delta})^{*}$ is the subspace of $D'(H_{n})$ consisting of those distributions which extend to give bounded linear functionals on $S^{p}_{2,\delta}$, endowed with the dual norm.
From the H\"{o}lder inequality  \eqref{opw11}, we have 
\[\|uv\|_{1,-Q}\leq\|u\|_{p,\delta-2}\|v\|_{p',2-Q-\delta},\] 
which implies that $:(S^{p}_{0,\delta-2})^{*}=S^{p'}_{2-Q-\delta}$, where $p'=p/(p-1)$. If $-2n<\delta<0$, which implies that $-2n<2-Q-\delta<0$, and moreover
$\frac{q}{q-1}<p\leq q$ means that $1<p'\leq q$, then Proposition \ref{we01} shows that $Ker(P^{*})\subset S^{p'}_{2,2-Q-\delta}$ and hence 
\[\textrm{dim\ coker}P=\textrm{dim\ ker}P^{*}=N(P^{*},2-Q-\delta).\]
This is finite by Theorem \ref{kefp}; hence $P$ is Fredholm with Fredhom index
\begin{equation}
\iota(P,\delta)=N(P,\delta)-N(P^{*},2-Q-\delta).
\end{equation}
\end{proof}

Let $\xi_{0}=\ker{\Theta}$ be the standard contact bundle on the Heisenberg group. Instead of the standard pseudo-hermitian structure $(J_{0},\Theta)$, we
consider another pseudo-hermitian structure $(J,\theta)$ with $\ker{\theta}=\xi_{0}$. Let $P=\Delta_{b}$ be the corresponding sub-Laplacian, then $P^{*}=P$. We write 
\[\Delta_{b}=g_{ij}(z,t)\mathring{e}_{i}\mathring{e}_{j}+\cdots.\]
We have the following corollary.
\begin{cor}\label{mainres01}
Suppose that $g_{ij}$ is uniformly sub-elliptic in $(\R^{2n+1},\xi_{0})$ and $(g_{ij}-\delta_{ij})\in S^{q}_{1,-\tau}$ for some $Q<q<\infty$, and that $-2n<\delta<0$ and $\frac{q}{q-1}<p\leq q$. Then $\Delta_{b}:S^{p}_{s+2,\delta}\rightarrow S^{p}_{s,\delta-2}$ is an isomorphism.
\end{cor}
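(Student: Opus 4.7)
The operator $\Delta_b$ is formally self-adjoint ($\Delta_b^{*}=\Delta_b$), and from the hypothesis $(g_{ij}-\delta_{ij})\in S^{q}_{1,-\tau}$ — together with the fact that the lower-order coefficients of a pseudo-Hermitian sub-Laplacian are polynomial expressions in $g_{ij}$ and its first derivatives and so inherit the required decay — I would first check that $\Delta_b$ is asymptotic to $\mathring{\Delta}_b$ at rate $\tau$ in the sense of \eqref{opw21}--\eqref{opw22}. Then Proposition~\ref{mainres} applies: for $-2n<\delta<0$ and $q/(q-1)<p\leq q$, the map $\Delta_b:S^{p}_{2,\delta}\to S^{p}_{0,\delta-2}$ is Fredholm with index
\[
\iota(\Delta_{b},\delta)=N(\Delta_{b},\delta)-N(\Delta_{b},2-Q-\delta).
\]
Noting that $\delta\in(-2n,0)$ forces $2-Q-\delta=-2n-\delta\in(-2n,0)$, the problem reduces to showing $N(\Delta_{b},\delta')=0$ for every $\delta'\in(-2n,0)$.

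\textbf{Kernel triviality.} Take $u\in S^{p}_{2,\delta'}$ with $\Delta_b u=0$. Sub-elliptic hypoellipticity and bootstrapping via Proposition~\ref{we01} will place $u$ in $S^{p}_{s,\delta'}$ for every $s\geq 0$, so the Sobolev estimate~\eqref{opw15} will give $|u(z,t)|=o(\rho^{\delta'})\to 0$ as $\rho\to\infty$. The key step is then to invoke the strong maximum principle for sub-Laplacians — valid for $\Delta_b$ by uniform sub-ellipticity \eqref{opw21} — to conclude $u\equiv 0$: if $u$ were nonzero somewhere, $|u|$ would attain a positive interior extremum and would have to be constant there, contradicting decay to zero at infinity. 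Applied with $\delta'=\delta$ and $\delta'=2-Q-\delta$, this gives $\iota(\Delta_b,\delta)=0$ and $\ker\Delta_b=0$, proving the isomorphism for $s=0$ and $-2n<\delta<0$. The extension to $s\geq 1$ is immediate from the regularity part of Proposition~\ref{we01}.

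\textbf{The case $\delta\leq -2n$ and the main obstacle.} For this remaining range I would adapt the inclusion-diagram argument at the end of Theorem~\ref{keythm}: pick any $\delta^{*}\in(-2n,0)$; Proposition~\ref{ebe1} supplies a continuous inclusion $S^{p}_{s+2,\delta}\hookrightarrow S^{p}_{s+2,\delta^{*}}$ that commutes with $\Delta_b$, so injectivity at $\delta^{*}$ forces injectivity at $\delta$; Theorem~\ref{kefp} then supplies closed range, and the self-adjointness of $\Delta_b$ combined with triviality of the kernel on the dual weight yields surjectivity. The main obstacle will be the kernel-triviality step — specifically, ensuring that the strong maximum principle survives the perturbation $\mathring{\Delta}_b\rightsquigarrow\Delta_b$; uniform sub-ellipticity \eqref{opw21} is exactly what provides this. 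Secondary technical points are the verification that the $b^i$ and $c$ coefficients of $\Delta_b$ inherit the decay required by \eqref{opw22} from $(g_{ij}-\delta_{ij})\in S^{q}_{1,-\tau}$, and the duality bookkeeping for $\delta\leq -2n$, where the adjoint weight $2-Q-\delta$ leaves the favorable range $(-2n,0)$.
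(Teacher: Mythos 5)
For the range $-2n<\delta<0$ your argument is exactly the paper's: Proposition \ref{mainres} gives the Fredholm property, the reflection $\delta\mapsto 2-Q-\delta=-2n-\delta$ preserves the interval $(-2n,0)$, and kernel triviality at both weights follows from decay at infinity (via \eqref{opw15}) plus the strong maximum principle, so the index is zero and the map is a bijection; your additional care in checking that the lower-order coefficients of $\Delta_b$ inherit the decay required by \eqref{opw22}, and the bootstrap to higher $s$ via Proposition \ref{we01}, are points the paper glosses over but are needed and unproblematic.

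The genuine gap is in your treatment of $\delta\le-2n$. There the dual weight $\delta^{*}=2-Q-\delta=-2n-\delta$ is $\ge 0$, and the step ``triviality of the kernel on the dual weight yields surjectivity'' fails: for $\delta^{*}>0$ the constants belong to $S^{p'}_{2,\delta^{*}}$ (since $\int\sigma^{-\delta^{*}p'-Q}dV<\infty$) and are annihilated by $\Delta_b$, which has no zeroth-order term, so $N(\Delta_b,\delta^{*})\ge 1$; moreover \eqref{opw15} gives no decay when $\delta^{*}\ge 0$, so the maximum-principle argument cannot even start at that weight. In fact surjectivity is genuinely false in this range, already for $\mathring{\Delta}_{b}$ on the unpunctured spaces: if $f\in C^{\infty}_{0}$ has nonzero integral, every solution of $\mathring{\Delta}_{b}u=f$ has leading term $c\rho^{-2n}$ with $c\neq 0$ at infinity, and $\rho^{-2n}$ lies in $L^{p}_{\delta}$ near infinity only for $\delta>-2n$; hence no solution lies in $S^{p}_{2,\delta}$ when $\delta\le-2n$ (with $\delta=-2n$ the borderline exceptional weight). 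The inclusion-diagram trick at the end of Theorem \ref{keythm} does not transfer, because it is applied there to the punctured spaces $S'^{p}_{k,\delta}$, where the $\rho$-weight also constrains behavior at the origin and the constant obstruction is absent. Note that the paper's own proof silently restricts to $-2n<\delta<0$ (its sentence ``$-2n<\delta<0$ if and only if $-2n<\delta^{*}<0$''), so the corollary should be read with that restriction; your core argument proves exactly this, and the $\delta\le-2n$ extension should be dropped rather than repaired.
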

\begin{proof}
From Proposition \ref{mainres}, the sub-Laplacian $\Delta_{b}:S^{p}_{2,\delta}\rightarrow S^{p}_{0,\delta-2}$ is Fredholm. And since $-2n<\delta<0$ if and only if $-2n<\delta^{*}<0$, where $\delta^{*}=2-Q-\delta$. It suffices to show that $N(\Delta_{b},\delta)=0$. But if $\Delta_{b}u=0$ and $u\in S^{p}_{2,\delta}$, then $u=o(1)$ at infinity and {\bf the strong maximum principle} for weak solutions (see \cite{GT}, Theorem 8.19) shows that $u=0$.
\end{proof}

\section{Asymptotically Flat Pseudo-hermitian Manifolds}

\begin{de}
A $(2n+1)$-dimensional pseudo-hermitian manifold $(N,J,\theta)$ is said to be asymptotically flat pseudo-hermitian if $N=N_{0}\cup N_{\infty}$, with $N_{0}$ compact and $N_{\infty}$ diffemorphic to $H_{n}\setminus B_{\rho_{0}}$ in which $(J,\theta)$ is close to $(\mathring{J},\mathring{\theta})$ in the sense that
\begin{equation}
\begin{split}
\theta&=(1+c_{n}A\rho^{-2n}+O(\rho^{-2n-1}))\mathring{\theta}+O(\rho^{-2n-1})_{\beta}dz^{\beta}+O(\rho^{-2n-1})_{\bar\beta}dz^{\bar\beta};\\
\theta^{\alpha}&=O(\rho^{-2n-1})\mathring{\theta}+O(\rho^{-2n-2})_{\bar\beta}{}^{\alpha}dz^{\bar\beta}+(1+\tilde{c}_{n}A\rho^{-2n}+O(\rho^{-2n-1}))\sqrt{2}dz^{\alpha},
\end{split}
\end{equation}
for some $A\in\R$ and a unitary co-frame $\theta^{\alpha}$ in coordinates $(z^{\beta},z^{\bar\beta},t)$ (called asymptotic coordinates) for $N_{\infty}$ on which $\rho=((\sum_{\beta=1}^{n}|z^{\beta}|^{2})^{2}+t^{4})^{1/4}$ is defined. We also require the Tanaka-Webster scalar curvature $W\in L^{1}(N)$.
\end{de}

Suppose that $(M,J)$ is a {\bf spherical} in a neighborhood of a point $x\in M$. If we blow up at $x$ through the Green's function of the CR invariant sub-Laplacian
\[G_{x}=\frac{a_{n}}{2\pi}\rho^{-2n}+A_{x}+O(\rho),\] 
which is expressed in terms of the CR normal coordinates, then we obtain an asymptotically flat pseudo-hermitian manifold (see \cite{CC} for more explanation). We denote such a manifold by $N(x)$. In such a case, we have
\begin{equation}
\begin{split}
\theta&=(1+c_{n}A_{x}\rho^{-2n}+O(\rho^{-2n-1}))\mathring{\theta};\\
\theta^{\alpha}&=O(\rho^{-2n-1})\mathring{\theta}+(1+\tilde{c}_{n}A_{x}\rho^{-2n}+O(\rho^{-2n-1}))\sqrt{2}dz^{\alpha},
\end{split}
\end{equation}
where $c_{n}=\frac{4\pi}{na_{n}}$ and $\tilde{c}_{n}=\frac{2\pi}{na_{n}}$. Let $\{Z_{\alpha}, Z_{\bar\alpha},T\}$ be the frame dual to $\{\theta^{\alpha}, \theta^{\bar\alpha},\theta\}$. It is easy to see that 
\begin{equation}
Z_{\alpha}=(1-\tilde{c}_{n}A_{x}\rho^{-2n}+O(\rho^{-2n-1}))\mathring{Z}_{\alpha}+O(\rho^{-2n-2})_{\alpha}{}^{\bar\beta}\mathring{Z}_{\bar\beta}.
\end{equation}
On the other hand, from the structure equations of pseudo-hermitian manifolds, we have the asymptotic behavior of the pseudo-hermitian connection forms
\begin{equation}
\theta_{\alpha}{}^{\beta}=O(\rho^{-2n-2})\mathring{\theta}+B_{\alpha}{}^{\beta}{}_{\gamma}dz^{\gamma}+C_{\alpha}{}^{\beta}{}_{\bar\gamma}dz^{\bar\gamma},
\end{equation}
where for any fixed $\alpha$, we have
\begin{equation}
\begin{split}
C_{\alpha}{}^{\beta}{}_{\bar\gamma}&=O(\rho^{-2n-2}),\ \textrm{for}\ \beta\neq\alpha, \gamma\neq\alpha\\
C_{\alpha}{}^{\beta}{}_{\bar\alpha}&=\frac{-inc_{n}A_{x}z^{\beta}\omega}{\rho^{2n+4}}+O(\rho^{-2n-2}),\ \textrm{for}\ \beta\neq\alpha\\
C_{\alpha}{}^{\alpha}{}_{\bar\gamma}&=\frac{-in\tilde{c}_{n}A_{x}z^{\gamma}\omega}{\rho^{2n+4}}+O(\rho^{-2n-2}),\ \textrm{for}\ \gamma\neq\alpha\\
C_{\alpha}{}^{\alpha}{}_{\bar\alpha}&=\frac{-in(c_{n}+\tilde{c}_{n})A_{x}z^{\alpha}\omega}{\rho^{2n+4}}+O(\rho^{-2n-2}),\\
\end{split}
\end{equation}
and $B_{\alpha}{}^{\beta}{}_{\gamma}=-\overline{C_{\beta}{}^{\alpha}{}_{\bar\gamma}}$. Therefore, we have
\begin{equation}\label{asybesub}
\Delta_{b}=\left(1+\frac{4\tilde{c}_{n}A_{x}}{\rho^{2n}}\right)\mathring{\Delta}_{b}+\textrm{higher decay order},
\end{equation}
which implies that $\Delta_{b}$ is asymptotic to $\mathring{\Delta}_{b}$ for some $q$ with $Q<q<\infty$. We remark that if the manifold $(M,J)$ is not spherical, then, in general, formula \eqref{asybesub} does not hold.

As used in Corollary \ref{mainres01}, the strong maximum principle implies the following theorem.

\begin{thm}\label{mainres02} 
Suppose that $(M,J)$ is a CR manifold which is {\bf spherical} in a neighborhood of a point $x\in M$. We consider the blow up manifold $N(x)$ at this point $x$. Choose $q>2n+2$ such that $\Delta_{b}$ is asymptotic to $\mathring{\Delta}_{b}$, $\frac{q}{q-1}<p\leq q$ and $-2n<\delta<0$. Then
\begin{equation}
\Delta_{b}:S^{p}_{s+2,\delta}(N(x))\rightarrow S^{p}_{s,\delta-2}(N(x))
\end{equation}
is an isomorphism.
\end{thm}

\subsection{The Weitzenbock formula for $n=2$}
In this subsection, we will give an isomorphism theorem for Dirac operators on spinor bundles. More details and notations on the geometry of context of spinors here may be found in \cite{CC}. Suppose $(M,J,\theta)$ is a spherical CR manifold with a spin structure $Spin(\xi)$ on the contact bundle $\xi=\ker{\theta}$, together with the Levi metric $L_{\theta}$. Let $\Lambda_{R}^{k}(n)$ and $\Lambda_{C}^{k}(n)$ denote the real and complex vector spaces respectively, spanned by $\{\omega^{j_1}\wedge\cdots\wedge\omega^{j_k}\ |\ 1\leq j_{1}<\cdots<j_{k}\leq n\}$ (view symbols $\omega^{1},\cdots,\omega^{n}$ as independent vectors).
Recall, from \cite{CC}, that there is a canonical representation $\mathcal{N}$ of the Clifford algebra $C_{2n}(-1)$ such that $\Lambda_{C}^{*}(n)$ is an irreducible $C_{2n}(-1)$-modules, and $\Lambda_{C}^{even}(n), \Lambda_{C}^{odd}(n)$ are two irreducible $Spin(2n)$-modules. We hence have the following three associated vector bundles:
\[S=Spin(\xi)\times_{\mathcal{N}}\Lambda_{C}^{*}(n),\ S^{+}=Spin(\xi)\times_{\mathcal{N}}\Lambda_{C}^{even}(n),\ S^{-}=Spin(\xi)\times_{\mathcal{N}}\Lambda_{C}^{odd}(n),\] 
which are called spinor bundles. On each spinor bundle there is associated a unique spin connection $\nabla$, constructed from the pseudo-hermitian connection, and hence a Dirac operator, which is denoted by $D_{\xi}$. We have a Weitzenbock-type formula as follows:
\[D_{\xi}^{2}=\nabla^{*}\nabla+W-2\sum_{\beta=1}^{n}e_{\beta}e_{n+\beta}\nabla_{T},\]
where $\{e_{1},\cdots,e_{2n}\}$ is a local orthonormal frame field such that $e_{n+\beta}=Je_{\beta}$ for $1\leq\beta\leq n$. Fortunately, due to some algebraic property in the case $n=2$, we reduce the formula to
\begin{equation}\label{wtfby3.3}
D_{\xi}^{2}=\nabla^{*}\nabla+W,\ \ \ \textrm{on}\ S^{+}
\end{equation}
It is this formula that makes Witten's approach work on this geometry. Actually we have the following isomorphism theorem for Dirac operators.

\begin{thm}\label{mainres03} 
Suppose $n=2$ and suppose that $(M,J)$ is a CR manifold which is {\bf spherical} in a neighborhood of a point $x\in M$. We consider the blow up manifold $N(x)$ at this point $x$. Suppose that $\frac{q}{q-1}<p\leq q\ (q$ is specified as in Theorem \ref{mainres02}$)$ and $-2n<\delta<0$. Then
\begin{equation}
D^{2}_{\xi}:S^{p}_{s+2,\delta}(S^{+})\rightarrow S^{p}_{s,\delta-2}(S^{+})
\end{equation}
is an isomorphism.
\end{thm}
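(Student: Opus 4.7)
The plan is to parallel Theorem~\ref{mainres02}, replacing the scalar sub-Laplacian by the spinor system $D_\xi^2$ and replacing the strong maximum principle by a Weitzenböck-plus-integration-by-parts argument. First I would verify that $D_\xi^2$ satisfies the system analog of conditions \eqref{opw21}--\eqref{opw22}. By the Weitzenböck formula for $n=2$, $D_\xi^2 = \nabla^*\nabla + W$ on $S^+$; and since $N(p)$ is obtained by Green's function blow-up of the CR invariant sub-Laplacian, the Tanaka--Webster scalar $W$ of $N(p)$ vanishes identically, so $D_\xi^2 = \nabla^*\nabla$ throughout $N(p)$. In a local spinor frame adapted to the asymptotic unitary coframe $\{\theta^\alpha\}$, the spin connection 1-forms are linear in the pseudo-hermitian connection forms $\theta_\alpha{}^\beta$, whose expansion (recalled just above the statement) decays like $O(\rho^{-2n-2})$. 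Thus, componentwise, $\nabla^*\nabla$ acts as $-\mathring\Delta_b$ plus lower-order perturbations whose coefficients satisfy \eqref{opw21}--\eqref{opw22} at rate $\tau=2n>0$.

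Second, I would extend the Fredholm machinery of Proposition~\ref{mainres} to this rank-$2^{n-1}$ system. Lemma~\ref{conbon} and the proofs of Theorems~\ref{keythm}, \ref{kefp} and Proposition~\ref{mainres} all go through verbatim coordinate-by-coordinate, since the leading operator is the diagonal system $-\mathring\Delta_b\otimes\mathrm{Id}$. This yields Fredholmness of $D_\xi^2:S^p_{s+2,\delta}(S^+)\to S^p_{s,\delta-2}(S^+)$ for $-2n<\delta<0$ and $q/(q-1)<p\leq q$, with Fredholm index $\iota(D_\xi^2,\delta)=N(D_\xi^2,\delta)-N(D_\xi^2,2-Q-\delta)$, the self-adjointness of $D_\xi^2$ being used to identify the cokernel.

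Third, I would show both $N(D_\xi^2,\delta)=0$ and $N(D_\xi^2,2-Q-\delta)=0$ via a Witten-style identity. For $\psi\in\ker D_\xi^2\cap S^p_{2,\delta}(S^+)$, the Sobolev estimate~\eqref{opw15} applied to $\psi$ and to $\nabla\psi$ gives $|\psi|=o(\rho^\delta)$ and $|\nabla\psi|=o(\rho^{\delta-1})$. For $\delta<1-n$ the boundary flux $\int_{\partial B_R}\langle D_\xi\psi\cdot\nu,\psi\rangle$ is controlled by $R^{Q-1}\cdot R^{2\delta-1}\to 0$, so pairing and integrating by parts first produces $\int|D_\xi\psi|^2=0$, and then the Weitzenböck identity together with $W=0$ yields $\int|\nabla\psi|^2=0$; the decay of $\psi$ then forces $\psi\equiv 0$. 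For the remaining range $1-n\leq\delta<0$, the conjugate weight $2-Q-\delta$ lies in $(-2n,1-n)$ where the direct argument applies, giving $N(D_\xi^2,2-Q-\delta)=0$, and the constancy of the Fredholm index on the connected range $(-2n,0)$ then forces $N(D_\xi^2,\delta)=0$ as well. Hence $D_\xi^2$ is an isomorphism for $s=0$ and $-2n<\delta<0$. Sub-elliptic regularity extends the statement to $s\geq 1$, and the commutative-diagram argument from the end of Theorem~\ref{keythm}, applied to the system, handles $\delta\leq -2n$.

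The main technical obstacle is the boundary-flux estimate for weights $\delta$ close to zero: the naive bound $|\nabla\psi|\,|\psi|\lesssim\rho^{2\delta-1}$ integrates over $\partial B_R$ to $O(R^{2\delta+Q-2})$, which only vanishes for $\delta<1-n$. The outline above circumvents this through the invariance of the Fredholm index, but an alternative is to sharpen the decay of kernel elements by an indicial-root bootstrap for systems asymptotic to $\mathring\Delta_b$, showing that every $\psi\in\ker D_\xi^2\cap S^p_{2,\delta}$ actually lies in $S^p_{2,\delta'}$ for every $\delta'>2-Q$.
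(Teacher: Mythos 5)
Your route is genuinely different from the paper's: the paper's entire proof is the computation $\nabla^{*}\nabla\varphi=-(\Delta_{b}\varphi^{\lambda})s_{\lambda}+\textrm{l.o.t.}$ followed by an appeal to Theorem~\ref{mainres02}, i.e.\ it never says how the strong-maximum-principle step of Corollary~\ref{mainres01} (a scalar tool) is to be replaced for the spinor system. That is precisely the point you attack, and your main inputs are sound: $W\equiv 0$ on $N(p)$ is correct because the blow-up is through the Green's function of the CR invariant sub-Laplacian, the componentwise verification of \eqref{opw21}--\eqref{opw22} at rate $\tau=2n$ matches \eqref{asybesub}, and the system versions of Theorem~\ref{kefp} and Proposition~\ref{mainres} do go through since the principal part is diagonal.

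The gap is in your third step. First, the flux arithmetic is off: the boundary term is $o(R^{Q-1}\cdot R^{2\delta-1})=o(R^{Q-2+2\delta})$, which tends to zero precisely when $\delta\leq\frac{2-Q}{2}=-n$, not when $\delta<1-n$; for $n=2$ the direct Witten-type vanishing covers only $\delta\in(-2n,-n]$. This slip is reparable, because for $\delta\in(-n,0)$ the conjugate weight $2-Q-\delta$ still lies in $(-2n,-n)$. The real problem is the ingredient you then invoke, the ``constancy of the Fredholm index on $(-2n,0)$.'' Nothing in the paper, and nothing in your outline, proves this: Proposition~\ref{mainres} gives the index formula at each fixed $\delta$, and monotonicity of $N(\cdot)$ in the weight gives at best that $\iota$ is nondecreasing; duality only converts ``injectivity at $\delta\in(-n,0)$'' into ``surjectivity at $2-Q-\delta$,'' which is the same unknown. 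Constancy of the index across the interval is equivalent to an improved-decay (indicial-root) statement for kernel elements of operators asymptotic to $\mathring{\Delta}_{b}$, i.e.\ exactly the ``alternative bootstrap'' you mention but do not carry out, and it needs a harmonic-expansion or Liouville-type analysis for $\mathring{\Delta}_{b}$ on exterior Heisenberg domains that is not among the tools established by Theorem~\ref{keythm} or \eqref{opw26}. Until one of these is actually proved, $N(D_{\xi}^{2},\delta)=0$ for $-n\leq\delta<0$ --- the weights most relevant for the Witten spinor --- is not established. A minor further point: the pointwise decay $|\psi|=o(\rho^{\delta})$, $|\nabla\psi|=o(\rho^{\delta-1})$ from \eqref{opw15} requires first bootstrapping $\psi\in S^{p}_{s,\delta}$ with $sp>Q$ via Proposition~\ref{we01}; this should be said explicitly.
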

\begin{proof}
 Let $\varphi=\varphi^{\lambda}s_{\lambda}$ be a spinor spanned by a frame field of spinors $\{s_{\lambda}\}$. We compute
 \begin{equation}
 \begin{split}
 \nabla^{*}\nabla\varphi&=-\sum_{a=1}^{2n}\nabla^{2}_{e_{a},e_{a}}\varphi\\
 &=-\sum_{a=1}^{2n}\left(\nabla_{e_a}\nabla_{e_a}\varphi-\nabla_{\nabla_{e_a}e_{a}}\varphi\right)\\
 &=-\sum_{a=1}^{2n}(e_{a}e_{a}\varphi^{\lambda}-(\nabla_{e_a}e_{a})\varphi^{\lambda})s_{\lambda}+2(e_{a}\varphi^{\lambda})\nabla_{e_a}s_{\lambda}+\varphi^{\lambda}\left(\nabla_{e_a}\nabla_{e_a}s_{\lambda}-\nabla_{\nabla_{e_a}e_{a}}s_{\lambda}\right)\\
 &=-(\Delta_{b}\varphi^{\lambda})s_{\lambda}+\ \textrm{l.o.t.},
 \end{split}
 \end{equation}
where \begin{equation*}
\begin{split}
\Delta_{b}f&=tr(\nabla df)=e_{a}e_{a}f-(\nabla_{e_a}e_{a})f,\ \ \textrm{for a function}\ f\\
\textrm{l.o.t.}&=2(e_{a}\varphi^{\lambda})\nabla_{e_a}s_{\lambda}+\varphi^{\lambda}\left(\nabla_{e_a}\nabla_{e_a}s_{\lambda}-\nabla_{\nabla_{e_a}e_{a}}s_{\lambda}\right).
\end{split}
\end{equation*}
Therefore the principal part of each component of $D_{\xi}^2$ is the sub-Laplacian $\Delta_{b}$, which is asymptotic to $\mathring{\Delta}_{b}$ at rate $\tau>0$.
Therefore, $D_{\xi}^2$ satisfies the scale-broken estimate \eqref{opw26}, and hence is Fredholm with adjoint
\[D^{2}_{\xi}=D_{\xi}^{*2}:S^{p'}_{s+2,2-Q-\delta}(S^{+})\rightarrow S^{p'}_{s,-Q-\delta}(S^{+}).\]
On the other hand, from \eqref{opw26} again, we see that $\ker{(D_{\xi}^2,\delta)}\subset C^{\infty}(N(x))$, thus if $\varphi\in\ker{(D_{\xi}^2,\delta)}$, then $|\varphi|^{2}\rightarrow 0$ at infinity and from \eqref{wtfby3.3} we have
\[\frac{1}{2}\Delta_{b}|\varphi|^{2}=|\nabla_{b}\varphi|^{2}+W|\varphi|^{2}\geq 0.\]
The strong maximum principle implies that $|\varphi|^{2}=0$. Since that $-2n<\delta<0$ if and only if $-2n<2-Q-\delta<0$, this shows that for $-2n<\delta<0$, both $\ker{D_{\xi}^2}$ and $\ker{D_{\xi}^{*2}}$ are trivial, and hence $D_{\xi}^2$ is an isomorphism.
\end{proof}

\bibliography{main}

\begin{thebibliography}{99}
\bibitem{CC} J.-H. Cheng and H.-L. Chiu, {\it Positive Mass Theorem and the CR Yamabe Equation on Five-dimensional Contact Spin Manifolds}, arXiv:2107.04209v1.

\bibitem{CCY} J.-H. Cheng, H.-L. Chiu and P. Yang, {\it Uniformization of spherical CR manifolds}, Adv. Math. 255(2014), 182-216.

\bibitem{CMY} J.-H. Cheng, A. Malchiodi and P. Yang, {\it A positive mass theorem in three dimensional Cauchy-Riemann geometry}, Adv. Math. 308(2017), 276-347.

\bibitem{Bar} R. Bartnik, {\it The Mass of an Asymptotically Flat Manifold}, Comm. Pure and App. Math., (1986), 661-693.

\bibitem{FS} G. B. Folland and E. M. Stein, {\it Estimates for the $\bar{\partial}_{b}$ Complex Analysis on the Heisenberg group}, Comm. Pure and App. Math., (1974), 429-522.

\bibitem{GT} D. Gilbarg and N. Trudinger, {\it Elliptic Partial Differential Equations of Second Order}, 2nd edition, Springer- Verlag, (1983).

\bibitem{PT} T. Parker and C. Taubes, {\it On Witten's proof of the positive energy theorem}, Comm. Math. Phys. 84(1982), 223-238.

\bibitem{W} E. Witten, {it A simple proof of the positive energy theorem}, Comm. Math. Phys. 80(1981), 381-402.



\end{thebibliography}
\bibliographystyle{plain}

\end{document}